\DeclareSymbolFont{largesymbols}{OMX}{yhex}{m}{n}
\DeclareMathAccent{\widehat}{\mathord}{largesymbols}{"62}
\newcommand{\Q}{{\mathbb Q}}
\newcommand{\Z}{{\mathbb Z}}
\newcommand{\C}{{\mathbb C}}
\newcommand{\F}{{\mathbb F}}
\newcommand{\R}{{\mathbb R}}
\newcommand{\U}{\mathcal{U}}
\newcommand{\Gal}{\textnormal{Gal}}
\newcommand{\Irr}{\textnormal{Irr}}
\newcommand{\Cen}{\textnormal{Cen}}
\newcommand{\Aut}{\textnormal{Aut}}
\newcommand{\tr}{{\textnormal{tr}}}
\newcommand{\End}{\textnormal{End}}
\newcommand{\suma}[1]{\widehat{#1}}
\newcommand{\inv}{{^{-1}}}
\newcommand{\GEN}[1]{\left\langle #1 \right\rangle}
\newcommand{\Stab}{\textnormal{Stab}}
\newcommand{\Orb}{\textnormal{Orb}}
\theoremstyle{plain}
\newtheorem{theorem}{Theorem}[section]
\newtheorem{definition}[theorem]{Definition}
\newtheorem{lemma}[theorem]{Lemma}
\newtheorem{corollary}[theorem]{Corollary}
\newtheorem{proposition}[theorem]{Proposition}
\theoremstyle{definition}
\newtheorem{remark}[theorem]{Remark}
\newtheorem{example}[theorem]{Example}
\begin{document}

\title[On idempotents and the number of simple components of semisimple group algebras]{On idempotents and the number of simple components of semisimple group algebras}

\author{Gabriela Olteanu} 
\address{Department of Statistics-Forecasts-Mathematics, Babe\c s-Bolyai University,
Str. T. Mihali 58-60, 400591 Cluj-Napoca, Romania}
\email{gabriela.olteanu@econ.ubbcluj.ro}

\author{Inneke Van Gelder}
\address{Department of Mathematics, Vrije Universiteit Brussel,
Pleinlaan 2, 1050 Brussels, Belgium}
\email{ivgelder@vub.ac.be}

\thanks{The research is supported by the grant PN-II-ID-PCE-2012-4-0100 and by the Research Foundation Flanders (FWO - Vlaanderen).}

\date{\today}

\maketitle

\begin{abstract}
We describe the primitive central idempotents of the group algebra over a number field of finite monomial groups. We give also a description of the Wedderburn decomposition of the group algebra over a number field for finite strongly monomial groups.  Further, for this class of group algebras, we describe when the number of simple components agrees with the number of simple components of the rational group algebra. Finally, we give a formula for the rank of the central units of the group ring over the ring of integers of a number field for a strongly monomial group.
\end{abstract}

\section{Introduction}
In 2004, Olivieri, del R\'io and Sim\'on \cite{Olivieri2004} showed a method to describe the primitive central idempotents of $\Q G$ for finite monomial groups. Furthermore, they were able to provide information on the Wedderburn decomposition of $\Q G$ for strongly monomial groups (including abelian-by-supersolvable groups). In 2007, Broche and del R\'io \cite{Broche2007} did the same for finite semisimple group algebras. 
Let $F$ be a number field. The main aim of this work is to show that similar methods can be used to describe the primitive central idempotents of $FG$ for finite monomial groups and to describe the Wedderburn decomposition of $FG$ for finite strongly monomial groups. This description allows us to compute the number of simple components of a group algebra $FG$ for finite strongly monomial groups and to see when this number is minimal, i.e. equals the number of simple components of $\Q G$. We find necessary and sufficient conditions on the presentation of the group $G$ and the number field $F$ for the number of simple components of $FG$ to be minimal, for finite abelian groups and some metacyclic groups.
An analogue is given for finite semisimple group algebras that agrees with the formula given by \cite{Ferraz2007} for finite abelian groups. 

Let $R$ be the ring of integers of a number field $F$. For a finite group $G$ we denote by $\U(R G)$ the unit group of the group ring $R G$. Its group of central units is denoted by $\mathcal{Z}(\U(R G))$. It follows from Dirichlet's Unit Theorem that $\mathcal{Z}(\U(R G))=T \times A$, where $T$ is a finite group and $A$ is a finitely generated free abelian group.
In the last section, we compute the rank of $A$, which is called the rank of $\mathcal{Z}(\U(R G))$ for $G$ a strongly monomial group. 

We first fix some notations. For two coprime integers $r$ and $m$, we denote the multiplicative order of $r$ modulo $m$ by $o_m(r)$. A group $G$ will always be assumed to be finite. Let $F$ be a number field. 

If $\alpha \in FG$ and $g \in G$ then $\alpha^g = g\inv \alpha g$ and $\Cen_G(\alpha)$ denotes the centralizer of $\alpha$ in $G$. The notation $H\leq G$ (respectively, $H \unlhd G$) means
that $H$ is a subgroup (resp., normal subgroup) of $G$. If $H\leq G$ then $N_G(H)$ denotes the normalizer of $H$ in $G$ and we set $\suma{H} = \frac{1}{|H|}\sum_{h\in H}h$. If
$g\in G$ then $\suma{g}=\suma{\GEN{g}}$.

By assumption, all the characters of $G$ are considered as complex characters. For an irreducible character $\chi$ of $G$, let $F(\chi)$ denote the field of character values of $\chi$, $e(\chi)=\frac{1}{|G|}\sum_{g\in G}\chi(g\inv)g$ the primitive central idempotent of $\C G$ associated to $\chi$ and $e_F(\chi)$ the unique primitive central idempotent of $FG$ such that $\chi(e_F(\chi))\neq 0$.

The Galois group $\Gal(F(\chi)/F)$ acts on $F(\chi) G$ by acting on the coefficients, that is 
$$\sigma \cdot \sum_{g\in G} a_gg = \sum_{g\in G} \sigma(a_g)g,$$ for $\sigma\in\Gal(F(\chi)/F)$ and $a_g\in F(\chi)$. 
Following \cite{Yamada1973}, we have \begin{eqnarray}\label{yamada}  e_{F}(\chi) = \sum_{\sigma \in \Gal(F(\chi)/F)} \sigma \cdot e(\chi).\end{eqnarray}  

If $G=\GEN{g}$ is cyclic of order $k$, then the irreducible characters are all linear and are defined by the image of a generator of $G$. Therefore the set $G^*=\Irr(G)$ of irreducible characters of $G$ is a group and the map $\phi:\Z/k\Z\rightarrow G^*$ given by $\phi(m)(g)=\zeta_k^m$ is a group homomorphism. The generators of $G^*$ are precisely the faithful representations of $G$. Let $\mathcal{C}_F(G)$ denote the set of orbits of the faithful characters of $G$ under the action of $\Gal(F(\zeta_k)/F)$. Note that for any faithful character $\chi$ of $G$, $F(\chi)=F(\zeta_k)$.

Each automorphism $\sigma\in\Gal(F(\zeta_k)/F)$ is completely determined by its action on $\zeta_k$, and is given by $\sigma(\zeta_k)=\zeta_k^t$, where $t$ is an integer uniquely determined modulo $k$. In this way, one gets the following morphisms 
$$\SelectTips{cm}{}
\xymatrix{
\Gal(F(\zeta_k)/F) \hspace{2mm} \ar@{^{(}->}[r] \ar[d]_{\simeq} & \hspace{1mm}\Gal(\Q(\zeta_k)/\Q) \ar[d]^{\simeq} \\ 
I_k(F) \hspace{2mm} \ar@{^{(}->}[r] & \hspace{1mm} \U(\Z/k\Z) 
}$$
where we denote the image of $\Gal(F(\zeta_k)/F)$ in $ \U(\Z/k\Z)$ by $I_k(F)$ (see \cite[\S21A]{1981CurtisReiner} for notations).
In this setting, the sets in $\mathcal{C}_F(G)$ corresponds one-to-one to the orbits under the action of $I_k(F)$ on $\U(\Z/k\Z)$ by multiplication.

For $x$ and $y$ in $G$, we say that $x$ and $y$ are $F$-conjugate if $x$ is conjugate in $G$ to $y^t$ for some $t\in I_e(F)$, where $e$ is the exponent of $G$. Hence, when $G$ is cyclic $\mathcal{C}_F(G)$ also corresponds to the $F$-conjugacy classes of $G$ containing generators of $G$.

\section{Primitive central idempotents associated to monomial irreducible characters}

Let $N\unlhd G$ be such that $G/N$ is cyclic of order $k$ and $C\in\mathcal{C}_F(G/N)$. If $\chi\in C$ and $\tr=\tr_{F(\zeta_k)/F}$ denotes the field trace of the Galois extension $F(\zeta_k)/F$, then we set 
$$\varepsilon_C(G,N)=\frac{1}{|G|}\sum_{g\in G} \tr(\chi(gN))g\inv=\frac{1}{|G|}\sum_{g\in G}\sum_{\psi\in C}\psi(gN)g\inv=[G:N]\inv \suma{N}\sum_{X\in G/N}\sum_{\psi\in C} \psi(X)g_X^{-1},$$ where $g_X$ denotes a representative of $X\in G/N$. Note that $\varepsilon_C(G,N)$ does not depend on the choice of $\chi\in C$. 

Let $K\unlhd H\leq G$ such that $H/K$ is cyclic and $C\in\mathcal{C}_F(H/K)$. Then $e_C(G,H,K)$ denotes the sum of the different $G$-conjugates of $\varepsilon_C(H,K)$.

\begin{proposition}\label{finiteabelian}
If $G$ is a finite abelian group of order $n$ and $F$ is a number field, then the map $(N,C)\mapsto \varepsilon_C(G,N)$ is a bijection from the set of pairs $(N,C)$ with $N\unlhd G$, such that $G/N$ is cyclic, and $C\in\mathcal{C}_F(G/N)$ to the set of primitive central idempotents of $F G$. Furthermore, for every $N\unlhd G$ and $C\in\mathcal{C}_F(G/N)$, $F G\varepsilon_C(G,N)\simeq F(\zeta_k)$, where $k=[G:N]$.
\end{proposition}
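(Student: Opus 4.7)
The plan is to parametrize $\Irr(G)$ via the abelian structure, pass to $\Gal$-orbits using Yamada's formula (\ref{yamada}), and identify the resulting idempotents with the $\varepsilon_C(G,N)$. Since $G$ is abelian, every $\chi\in\Irr(G)$ is linear and factors through $G/\Ker\chi\hookrightarrow\C^*$ as a faithful character $\tilde\chi$; conversely, every faithful linear character of a cyclic quotient $G/N$ lifts to an irreducible character of $G$ with kernel exactly $N$. Hence $\chi\mapsto(\Ker\chi,\tilde\chi)$ is a bijection between $\Irr(G)$ and pairs $(N,\tilde\chi)$ with $N\unlhd G$, $G/N$ cyclic of order $k$, and $\tilde\chi$ faithful on $G/N$; in particular $F(\chi)=F(\zeta_k)$.

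By Yamada's formula, the primitive central idempotents of $FG$ are exactly the $e_F(\chi)$ for $\chi\in\Irr(G)$, with $e_F(\chi)=e_F(\chi')$ iff $\chi,\chi'$ are $\Gal$-conjugate over $F$. Two linear characters with the same kernel $N$ are Galois-conjugate over $F$ precisely when their induced faithful characters of $G/N$ lie in the same $I_k(F)$-orbit on $\U(\Z/k\Z)$, i.e., in the same element of $\mathcal{C}_F(G/N)$. This gives the desired indexing by pairs $(N,C)$. To identify the idempotent itself, I would start from $e(\chi)=|G|\inv\sum_g\chi(g\inv)g$, reindex via $g\mapsto g\inv$ to obtain $e(\chi)=|G|\inv\sum_g\chi(g)g\inv$, and then apply (\ref{yamada}):
$$ e_F(\chi) \;=\; \frac{1}{|G|}\sum_{g\in G}\tr(\chi(g))\,g\inv \;=\; \frac{1}{|G|}\sum_{g\in G}\tr(\tilde\chi(gN))\,g\inv \;=\; \varepsilon_C(G,N). $$

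For the Wedderburn component, commutativity of $FG$ forces $FG\varepsilon_C(G,N)$ to be a field extension of $F$, and $\chi$ extends $F$-linearly to a ring homomorphism $FG\to F(\zeta_k)$ that annihilates every primitive central idempotent other than $\varepsilon_C(G,N)$; it therefore factors through $FG\varepsilon_C(G,N)$ as a surjection onto $F(\zeta_k)$, and the domain being a field this surjection is an isomorphism. The main task is bookkeeping in the middle step: aligning Yamada's action of $\Gal(F(\chi)/F)$ on $F(\chi)G$ with the more combinatorial action of $I_k(F)$ on $\U(\Z/k\Z)$ that defines $\mathcal{C}_F(G/N)$. This alignment is precisely what the commutative diagram of Galois groups stated just before the proposition provides, together with the elementary observation that, for a cyclic group of order $k$, two faithful linear characters are Galois-conjugate over $F$ if and only if the corresponding primitive $k$-th roots of unity are.
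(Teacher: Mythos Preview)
Your proof is correct and takes essentially the same approach as the paper: parametrize the primitive central idempotents of $FG$ by Galois orbits of linear characters via Yamada's formula~(\ref{yamada}), identify these orbits with pairs $(N,C)$ using that Galois conjugation preserves kernels, and verify $e_F(\chi)=\varepsilon_C(G,N)$ through the trace. The only organizational differences are that the paper handles injectivity by a separate direct argument (starting from $\varepsilon_{C_1}(G,N_1)=\varepsilon_{C_2}(G,N_2)$ and deducing $N_1=N_2$, then $C_1=C_2$), and simply asserts $FGe_F(\psi)\simeq F(\psi)=F(\zeta_k)$ without the explicit ring-homomorphism argument you supply.
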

\begin{proof}
 If $e$ is a primitive central idempotent of $FG$, then there exists an irreducible character $\psi$ of $G$ such that $e=e_F(\psi)$. Since $G$ is abelian, $\psi$ is linear. Let $N$ denote the kernel of $\psi$ and let $\chi$ be the faithful character of $G/N$ given by $\chi(gN)=\psi(g)$. Then $G/N$ is cyclic, say of order $k$, $F(\psi)=F(\zeta_k)$ and the orbit of $\chi$ under the action of $\Gal(F(\zeta_k)/F)$ is an element of $\mathcal{C}_F(G/N)$. Furthermore 
\begin{eqnarray}\label{eq1}
\begin{gathered}
 e_F(\psi)= \sum_{\sigma\in\Gal(F(\psi)/F)} \sigma\cdot e(\psi)=\frac{1}{|G|}\sum_{\sigma\in \Gal(F(\psi)/F)}\sum_{g\in G} \sigma(\psi(g))g\inv\\
= \frac{1}{|G|}\sum_{\sigma\in \Gal(F(\psi)/F)}\sum_{g\in G} \sigma(\chi(gN))g\inv=\frac{1}{|G|}\sum_{g\in G}\tr_{F(\zeta_k)/F}(\chi(gN))g\inv=\varepsilon_C(G,N).
\end{gathered}
\end{eqnarray}
This shows that the map is surjective and $FG\varepsilon_C(G,N)=FGe_F(\psi)\simeq F(\zeta_k)$. 

Assume now that $\varepsilon_{C_1}(G,N_1)=\varepsilon_{C_2}(G,N_2)$ with $N_i\unlhd G$ and $C_i\in \mathcal{C}_F(G/N_i)$. Take $\chi_i\in C_i$. Let $\pi_i:G\rightarrow G/N_i$ be the canonical projections and $\psi_i=\chi_i\circ \pi_i$. Then $\psi_i$ are irreducible characters of $G$. By~(\ref{eq1}), $e_F(\psi_1)=\varepsilon_{C_1}(G,N_1)=\varepsilon_{C_2}(G,N_2)=e_F(\psi_2)$ and $F(\psi_1)=F(\psi_2)$. If $K=F(\psi_i)$, then there exists a $\sigma\in \Gal(K/F)$ such that $\psi_2=\sigma \circ \psi_1$ and hence $N_1=\ker \psi_1=\ker \psi_2=N_2$. 
Now let $\pi\inv$ be a right inverse of $\pi_1=\pi_2$. Then $\chi_2=\psi_2\circ\pi\inv=\sigma\circ\psi_1\circ\pi\inv=\sigma\circ\chi_1$ and hence $C_1=C_2$. This shows that the map is injective.
 \end{proof}

\begin{corollary}\label{cyclicquotient}
 If $G$ is a finite group with normal subgroup $N$ such that $G/N$ is cyclic of order $k$ and $F$ is a number field, then $\varepsilon_C(G,N)$ is a primitive central idempotent of $FG$ and 
$F G\varepsilon_C(G,N)\simeq F(\zeta_k)$. Furthermore, if $D\in \mathcal{C}_F(G/N)$, then $\varepsilon_C(G,N)=\varepsilon_D(G,N)$ if and only if $C=D$.
\end{corollary}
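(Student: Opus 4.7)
The plan is to reduce the corollary to the abelian case already handled in Proposition~\ref{finiteabelian} by exploiting the central idempotent $\suma{N}$. Since $N$ is normal in $G$, the element $\suma{N}$ is a central idempotent of $FG$ and the standard map $g\mapsto gN$ induces an isomorphism of $F$-algebras $\eta:FG\suma{N}\longrightarrow F(G/N)$. The first observation I would record is that, directly from the displayed formula defining $\varepsilon_C(G,N)$, one has $\varepsilon_C(G,N)=\suma{N}\cdot\varepsilon_C(G,N)\in FG\suma{N}$, so it makes sense to compute its image under $\eta$.

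Next I would compute that image. Writing $\varepsilon_C(G,N)=\suma{N}\cdot[G:N]^{-1}\sum_{X\in G/N}\sum_{\psi\in C}\psi(X)g_X^{-1}$ and applying $\eta$, the representatives $g_X$ map to the corresponding elements $X\in G/N$, so
\begin{equation*}
\eta\bigl(\varepsilon_C(G,N)\bigr)=[G:N]^{-1}\sum_{X\in G/N}\sum_{\psi\in C}\psi(X)X^{-1}=\varepsilon_C(G/N,1),
\end{equation*}
the element associated in Proposition~\ref{finiteabelian} to the pair $(1,C)$ for the abelian group $G/N$.

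With this identification the three claims follow cleanly. Proposition~\ref{finiteabelian} applied to $G/N$ shows that $\varepsilon_C(G/N,1)$ is a primitive central idempotent of $F(G/N)$, hence by $\eta$ the element $\varepsilon_C(G,N)$ is a primitive central idempotent of $FG\suma{N}$; since $FG\suma{N}$ is a two-sided ideal direct summand of $FG$, primitive central idempotents of $FG\suma{N}$ are primitive central idempotents of $FG$. The isomorphism $FG\varepsilon_C(G,N)=FG\suma{N}\varepsilon_C(G,N)\simeq F(G/N)\varepsilon_C(G/N,1)\simeq F(\zeta_k)$ is inherited from Proposition~\ref{finiteabelian} through $\eta$. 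Finally, if $\varepsilon_C(G,N)=\varepsilon_D(G,N)$ for $C,D\in\mathcal{C}_F(G/N)$, applying $\eta$ gives $\varepsilon_C(G/N,1)=\varepsilon_D(G/N,1)$, and the injectivity statement of Proposition~\ref{finiteabelian} (with the normal subgroup fixed to be trivial in $G/N$) forces $C=D$.

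I do not anticipate a real obstacle: the only thing to check carefully is the factorization $\varepsilon_C(G,N)=\suma{N}\cdot(\cdots)$, which the paper's own formula already exhibits, and the compatibility of $\eta$ with primitivity of central idempotents, which is immediate from $FG=FG\suma{N}\oplus FG(1-\suma{N})$.
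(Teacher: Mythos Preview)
Your proposal is correct and follows essentially the same route as the paper's own proof: both use the ring isomorphism $FG\suma{N}\simeq F(G/N)$ induced by the canonical projection, identify $\varepsilon_C(G,N)$ with $\varepsilon_C(G/N,1)$ under it, and then invoke Proposition~\ref{finiteabelian} for the cyclic group $G/N$ to obtain primitivity, the isomorphism with $F(\zeta_k)$, and the injectivity in $C$. Your version is slightly more explicit about why primitivity in $FG\suma{N}$ implies primitivity in $FG$, but there is no substantive difference.
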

\begin{proof}
 The natural epimorphism $G\rightarrow G/N$ induces a ring isomorphism $\phi: FG\suma{N}\simeq F(G/N)$. Since $\varepsilon_C(G,N)\in FG\suma{N}$ and $\phi(\varepsilon_C(G,N))=\varepsilon_C(G/N,1)$ is a primitive central idempotent of $F(G/N)$ by Proposition~\ref{finiteabelian}, also $\varepsilon_C(G,N)$ is a primitive central idempotent in $FG$. Moreover, again by Proposition~\ref{finiteabelian}, $FG\varepsilon_C(G,N)\simeq F(G/N)\varepsilon_C(G/N,1)\simeq F(\zeta_k)$ and $\varepsilon_C(G,N)=\varepsilon_D(G,N)$ if and only if $\varepsilon_C(G/N,1)=\varepsilon_D(G/N,1)$ if and only if $C=D$.
 \end{proof}

Let $H$ be a subgroup of $G$, $\psi$ a linear character of $H$ and $g\in G$. Then $\psi^g$ denotes the character of $H^g$ given by $\psi^g(h^g)=\psi(h)$. Since $\ker(\psi^g)=\ker(\psi)^g$, the map $\psi\mapsto \psi^g$ is a bijection between linear characters of $H$ with kernel $K$ and linear characters of $H^g$ with kernel $K^g$. This map induces a bijection $\mathcal{C}_F(H/K)\rightarrow\mathcal{C}_F(H^g/K^g): C\mapsto C^g$. In this sense, the following equation is easy to verify:
\begin{eqnarray}\label{eq2}
\varepsilon_C(H,K)^g=\varepsilon_{C^g}(H^g, K^g).
\end{eqnarray}

Now let $K\unlhd H\leq G$ be such that $H/K$ is cyclic of order $k$. Then $N=N_G(H)\cap N_G(K)$ acts on $\mathcal{C}_F(H/K)$ by the rule $$(\{\chi^i\mid i\in I_{k}(F)\},g)\mapsto \{(\chi^g)^i\mid i\in I_{k}(F)\}.$$ 
Similarly, $N$ acts on the $F$-conjugacy classes of $H/K$ containing generators of $H/K$.
Consider the stabilizer of a $C\in\mathcal{C}_F(H/K)$, take $\chi\in C$ and fix $hK$ a generator of $H/K$,
\begin{eqnarray*}
 \Stab_N(C) &=& \{ g\in N\mid \chi^g=\chi^i \mbox{ for some } i\in I_k(F)\}\\
&=& \{g\in N \mid \chi(g\inv hgK)=\chi(h^iK)\mbox{ for some } i\in I_k(F)\}\\
&=& \{g\in N\mid g\inv hgK=h^iK \mbox{ for some } i\in I_k(F)\}\\
&=& \Stab_N(\overline{hK}),
\end{eqnarray*}
where $\overline{hK}$ denotes the $F$-conjugacy class of $hK$.
Note that we used that $\chi$ is faithful in the third equality. Note that $\Stab_N(\overline{hK})$ is independent on the choice of generator of $H/K$. Indeed, let $h_1K$ and $h_2K$ be generators such that $h_1K=h_2^jK$. Then 
\begin{eqnarray*}
\Stab_N(\overline{h_1K}) &=& \{g\in N\mid g\inv h_1gK=h_1^iK \mbox{ for some } i\in I_k(F)\}\\ &=& \{g\in N\mid (g\inv h_2gK)^j=(h_2^{i}K)^j \mbox{ for some } i\in I_k(F)\}\\
&\supseteq& \{g\in N\mid g\inv h_2gK=h_2^{i}K \mbox{ for some } i\in I_k(F)\} = \Stab_N(\overline{h_2K}),
\end{eqnarray*}
which gives an equality while reversing the role of $h_1$ and $h_2$. Hence $\Stab_N(C)$ is independent on the choice of $C\in\mathcal{C}_F(H/K)$ and hence it is the stabilizer of any $C\in\mathcal{C}_F(H/K)$ under the action of $N$. We denote this stabilizer by $E_F(G,H/K)$. Note that $E_{\Q}(G,H/K)=N$.

\begin{definition}
A pair $(H,K)$ of subgroups of $G$ is called a Shoda pair if it satisfies the following conditions:
\begin{enumerate}
\item $K \unlhd H$,
\item $H/K$ is cyclic,
\item if  $g\in G$ and $[H,g]\cap H \subseteq K$, then $g\in H$.
\end{enumerate}
\end{definition}

Let $H$ be a subgroup of $G$ and $\chi$ a character of $H$. Then $\chi^G$ is given by $$\chi^G(g)=\frac{1}{|H|}\sum_{x\in G}\chi^{\circ}(xgx^{-1}),$$ where $\chi^{\circ}$ is defined by $\chi^{\circ}(h)=\chi(h)$ if $h\in H$ and $\chi^{\circ}(y)=0$ if $y\notin H$. It is well known that $\chi^G$ is a character of $G$ and we call it the induced character on $G$ (see \cite[Corollary 5.3]{Isaacs1976}).

A character $\chi$ of $G$ is called monomial if there exist a subgroup $H\leq G$ and a linear character $\psi$ of $H$ such that $\chi = \psi^G$, the induced character on $G$. The group $G$ is called monomial if all its irreducible characters are monomial.

Then we can rephrase an old theorem of Shoda \cite{Shoda1933} as follows.
\begin{proposition}[Shoda] If $\chi$ is a linear character of a subgroup $H$ of $G$ with kernel $K$, then the induced character $\chi^G$ is irreducible if and only if $(H,K)$ is a Shoda pair. 
\end{proposition}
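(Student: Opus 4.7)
The plan is to derive the result directly from Mackey's irreducibility criterion. First note that two of the three Shoda conditions are automatic from the hypotheses on $\chi$: $K=\ker\chi$ is normal in $H$, and $H/K$ embeds into $\C^{\times}$ and so is finite cyclic. Hence conditions (i) and (ii) hold for free, and the content of the theorem reduces to equating ``$\chi^G$ irreducible'' with Shoda's condition (iii).

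Next, I would apply Mackey's criterion: $\chi^G$ is irreducible if and only if for every $g\in G\setminus H$ the restrictions $\chi|_{H\cap H^g}$ and $\chi^g|_{H\cap H^g}$ share no irreducible constituent. Because $\chi$ is linear, so is $\chi^g$, hence both restrictions are linear characters of $H\cap H^g$. Two linear characters of a finite group are either equal or orthogonal, so the Mackey condition collapses to: for every $g\in G\setminus H$,
$$\chi|_{H\cap H^g}\neq\chi^g|_{H\cap H^g}.$$

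Finally, I would translate the failure of this equality into a group-theoretic condition on $K$. Using the paper's convention $h^g=g\inv hg$, one has $\chi^g(x)=\chi(gxg\inv)$ for $x\in H\cap H^g$; combined with multiplicativity of $\chi$ and $\ker\chi=K$, equality of the two restrictions amounts to $x\inv gxg\inv\in K$ for every $x\in H\cap H^g$, i.e.\ $[x,g\inv]\in K$ for every such $x$. A quick check shows $[H,g\inv]\cap H=\{[h,g\inv]:h\in H\cap H^g\}$, because $[h,g\inv]=h\inv(ghg\inv)$ lies in $H$ precisely when $ghg\inv\in H$, that is when $h\in H\cap H^g$. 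Thus $\chi^G$ is irreducible iff $[H,g\inv]\cap H\not\subseteq K$ for every $g\notin H$; replacing $g$ by $g\inv$ and contraposing gives exactly Shoda's condition (iii). The main obstacle is nothing deep: it is only the bookkeeping needed to keep the conjugation and commutator conventions consistent between the Mackey formulation and Shoda's statement, and to ensure that passing from the set of commutators to the intersection $[H,g\inv]\cap H$ involves no loss.
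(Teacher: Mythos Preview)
The paper does not give its own proof of this proposition: it is stated as a reformulation of a classical result and supported only by the citation to Shoda's 1933 paper. Your argument via Mackey's irreducibility criterion is the standard modern route and is correct.

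Two small remarks. First, strictly speaking Mackey's criterion is stated in terms of double coset representatives $H\backslash G/H$, but since for a linear $\chi$ the condition $\chi|_{H\cap H^g}=\chi^g|_{H\cap H^g}$ is invariant under replacing $g$ by $hgh'$ with $h,h'\in H$, quantifying over all $g\in G\setminus H$ is equivalent; you might say this explicitly. Second, your computation
\[
\{[h,g^{-1}]:h\in H\}\cap H=\{[h,g^{-1}]:h\in H\cap H^{g}\}
\]
is exactly right, and this is what is meant by $[H,g]\cap H$ in the Shoda-pair condition: in this literature (e.g.\ the Olivieri--del R\'io--Sim\'on paper the present article follows) $[H,g]$ denotes the \emph{set} of commutators $\{h^{-1}g^{-1}hg:h\in H\}$, not the subgroup it generates. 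With that reading your final contraposition gives condition~(iii) on the nose, and the ``bookkeeping'' worry you flag disappears.
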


Hence for monomial groups $G$, all irreducible characters are associated with Shoda pairs of $G$. The next theorem provides a description of the primitive central idempotent given by a Shoda pair $(H,K)$ of a group $G$ as a multiple of $e_C(G,H,K)$.

\begin{theorem}
Let $G$ be a finite group and $(H,K)$ a Shoda pair of $G$. Let $F$ be a number field.
Let $\chi$ be a linear character of $H$ with kernel $K$ and $C$ be the orbit of $\chi$ under the action of $\Gal(F(\chi)/F)$. Then $\chi^G$ is irreducible and the primitive central idempotent of $F G$ associated to $\chi^G$ is 
$$e_{F}(\chi^G)=\frac{[\Cen_G(\varepsilon_C(H,K)):H]}{[F(\chi):F(\chi^G)]}e_C(G,H,K).$$
\end{theorem}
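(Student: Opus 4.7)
The plan mimics the strategy of Olivieri--del R\'io--Sim\'on for the rational case, adapted by carefully tracking the two Galois actions at play: that of $\Gal(F(\chi)/F)$ on linear characters of $H$ and that of $\Gal(F(\chi^G)/F)$ on irreducible characters of $G$. First, by Shoda's theorem quoted just above, $\chi^G$ is irreducible, so $e_F(\chi^G)$ is well defined. Next, applying Corollary~\ref{cyclicquotient} to $H$ with normal subgroup $K$ (whose quotient $H/K$ is cyclic by the Shoda condition) shows that $\varepsilon_C(H,K)$ is a primitive central idempotent of $FH$; the same computation as in equation~(\ref{eq1}) identifies $\varepsilon_C(H,K)$ with $e_F(\chi)$, so that in $\C H$ one has $\varepsilon_C(H,K) = \sum_{\chi' \in C} e(\chi')$.

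The central computational step is the classical identity
$$e(\chi^G) = \frac{1}{|H|}\sum_{g \in G} e(\chi)^g.$$
I would prove this by direct expansion, using $\chi^G(y^{-1}) = \frac{1}{|H|}\sum_{x:\, xyx^{-1}\in H}\chi(xy^{-1}x^{-1})$ together with the linearity of $\chi$ to verify that both sides equal $\frac{1}{|H|}\sum_y \chi^G(y^{-1})\, y$. Alternatively, one observes that $\frac{1}{|H|}\sum_g e(\chi)^g$ is central in $\C G$ and lies in the $\chi^G$-isotypic ideal $e(\chi^G)\C G$, hence is a scalar multiple of $e(\chi^G)$, with the scalar pinned down by comparing the coefficient of $1$. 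Applying this identity to every $\chi' \in C$ and summing gives
$$\sum_{\chi' \in C} e((\chi')^G) = \frac{1}{|H|}\sum_{g \in G} \varepsilon_C(H,K)^g.$$

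I then interpret the two sides. For the right-hand side, $\varepsilon_C(H,K) \in Z(FH)$ implies $H \subseteq \Cen_G(\varepsilon_C(H,K))$; grouping the sum by cosets of $\Cen_G(\varepsilon_C(H,K))$ in $G$ collapses it to $[\Cen_G(\varepsilon_C(H,K)):H]\, e_C(G,H,K)$. For the left-hand side, Galois action commutes with induction, so $(\sigma\chi)^G = \sigma \cdot \chi^G$ for every $\sigma \in \Gal(F(\chi)/F)$, and this equals $\chi^G$ exactly when $\sigma$ restricts to the identity on $F(\chi^G)$. Since $\chi$ is faithful on $H/K$, the action of $\Gal(F(\chi)/F)$ on $C$ is free, so $|C|=[F(\chi):F]$; the map $\chi' \mapsto (\chi')^G$ then surjects $C$ onto the $\Gal(F(\chi^G)/F)$-orbit of $\chi^G$ with constant fibers of size $[F(\chi):F(\chi^G)]$. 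Invoking Yamada's formula~(\ref{yamada}) for $\chi^G$, the left-hand side rewrites as $[F(\chi):F(\chi^G)]\, e_F(\chi^G)$.

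Equating the two expressions and dividing by $[F(\chi):F(\chi^G)]$ yields the stated formula. The main technical obstacle is the identity $e(\chi^G) = \frac{1}{|H|}\sum_g e(\chi)^g$, which is classical but requires care with the normalization conventions for the idempotents associated to (possibly non-linear) characters, together with the Galois fiber count in the last step, where one must distinguish the action of $\Gal(F(\chi)/F)$ on linear characters of $H$ from the induced action of $\Gal(F(\chi^G)/F)$ on $\chi^G \in \Irr(G)$ and verify that the restriction map $\Gal(F(\chi)/F) \to \Gal(F(\chi^G)/F)$ is surjective with kernel of order $[F(\chi):F(\chi^G)]$.
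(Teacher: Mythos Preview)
Your proposal is correct and follows essentially the same approach as the paper. The paper organizes the argument via a two-way table with rows indexed by $\sigma_i\in\Gal(F(\chi)/F)$ and columns by a right transversal $g_j$ of $H$ in $G$, computing the total $\sum_{i,j}\sigma_i\cdot e(\chi)^{g_j}$ by rows (giving $[F(\chi):F(\chi^G)]\,e_F(\chi^G)$ via Yamada) and by columns (giving $[\Cen_G(\varepsilon_C(H,K)):H]\,e_C(G,H,K)$); your version sums the identity $e((\chi')^G)=\tfrac{1}{|H|}\sum_{g\in G}e(\chi')^g$ over $\chi'\in C$, which is the same double count once one uses that the Galois action on $C$ is free---a point you make explicit and the paper leaves implicit.
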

\begin{proof}
Let $e=e(\chi)$. Let $\Gal(F(\chi)/F)=\{\sigma_1,\ldots,\sigma_n\}$ and $T=\{g_1,\ldots,g_m\}$ be a right transversal of $H$ in $G$. Then 
\begin{eqnarray*}
e(\chi^G) &=& \frac{1}{|G|} \sum_{g\in G} \chi^G(1)\chi^G(g^{-1}) g \\
&=& \frac{1}{|G|}\sum_{g\in G} \frac{|G|}{|H|}\chi(1) \left( \sum_{i=1}^m \chi^{\circ}(g_ig^{-1}g_i^{-1}) \right)g\\
&=& \frac{1}{|H|}\sum_{i=1}^m\sum_{h\in H}\chi(h^{-1}) g_i^{-1}hg_i\\
&=& \sum_{i=1}^m e\cdot g_i.
\end{eqnarray*}

Consider the table  
$$ \begin{array}{cccc|c}
\sigma_1\cdot e\cdot g_1 & \sigma_1\cdot e\cdot g_2 & \cdots & \sigma_1\cdot e\cdot g_m & \sigma_1\cdot e(\chi^G) \\
\sigma_2\cdot e\cdot g_1 & \sigma_2\cdot e\cdot g_2 & \cdots & \sigma_2\cdot e\cdot g_m & \sigma_2\cdot e(\chi^G)\\
\cdots & \cdots & \cdots & \cdots & \cdots \\
\sigma_n\cdot e\cdot g_1 & \sigma_n\cdot e\cdot g_2 & \cdots & \sigma_n\cdot e\cdot g_m & \sigma_n\cdot e(\chi^G)\\
\hline
\varepsilon_C(H,K)\cdot g_1 & \varepsilon_C(H,K)\cdot g_2 & \cdots & \varepsilon_C(H,K)\cdot g_m & *
\end{array}$$

We can compute the total sum $*$ by adding the elements of the last column or the elements of the last row:
\begin{eqnarray}\label{*} *=\sum_{i=1}^n \sigma_i \cdot e(\chi^G) = \sum_{j=1}^m \varepsilon_C(H,K)\cdot g_j .\end{eqnarray}

In the first sum of (\ref{*}) the elements to add are the elements of the $\Gal(F(\chi)/F)$-orbit of $e(\chi^G)$, each of them repeated $[F(\chi):F(\chi^G)]$ times. Using (\ref{yamada}), one has
\begin{eqnarray}\label{sum1} * = [F(\chi):F(\chi^G)] e_{F}(\chi^G). \end{eqnarray}
Similarly the second sum of (\ref{*}) adds up the elements of the $G$-orbit of $\varepsilon_C(H,K)$ (by equation (\ref{eq2})), each of them repeated $[\Cen_G(\varepsilon_C(H,K)):H]$ times. Therefore
\begin{eqnarray}\label{sum2} * = [\Cen_G(\varepsilon_C(H,K)):H]e_C(G,H,K).\end{eqnarray}
The theorem follows by comparing (\ref{sum1}) and (\ref{sum2}).
 \end{proof}

So for each Shoda pair $(H,K)$ of $G$, number field $F$ and each $C\in \mathcal{C}_F(H/K)$, there exists a unique $\alpha\in \Q$ such that $\alpha e_C(G,H,K)$ is a primitive central idempotent of $F G$.

\begin{lemma}\label{lemma} Let $F$ be a number field. Let $K\unlhd H\leq G$ be such that $H/K$ is cyclic of order $k$ and $C\in\mathcal{C}_F(H/K)$.
\begin{enumerate}
\item For every $g\in G$, the following statements are equivalent:
\begin{enumerate}
\item\label{s1} $g\in K$,
\item\label{s2} $g\varepsilon_C(H,K)=\varepsilon_C(H,K)$,
\item\label{s3} $\suma{g}\varepsilon_C(H,K)=\varepsilon_C(H,K)$.
\end{enumerate}
\item If $H\unlhd N_G(K)$, then $\Cen_G(\varepsilon_C(H,K))=E_F(G,H/K)$.
\end{enumerate}
\end{lemma}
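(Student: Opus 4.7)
The plan for Part (1) is to establish (a) $\Leftrightarrow$ (b) $\Leftrightarrow$ (c). The defining formula for $\varepsilon_C(H,K)$ exhibits $\suma{K}$ as a left factor, so $\suma{K}\varepsilon_C(H,K) = \varepsilon_C(H,K)$; hence (a) $\Rightarrow$ (b) follows at once from $g\suma{K} = \suma{K}$ whenever $g \in K$. The implication (b) $\Rightarrow$ (c) is elementary: iterating gives $g^i \varepsilon_C(H,K) = \varepsilon_C(H,K)$ for each $i$, and averaging over $\GEN{g}$ yields $\suma{g}\varepsilon_C(H,K) = \varepsilon_C(H,K)$. For (c) $\Rightarrow$ (b), I would invoke the general fact that for any finite subgroup $L \leq G$, $\suma{L}\alpha = \alpha$ if and only if $x\alpha = \alpha$ for every $x \in L$ (one direction uses $x\suma{L} = \suma{L}$, the other is just averaging); applied to $L = \GEN{g}$ this reduces (c) to (b).

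The core of Part (1) is (b) $\Rightarrow$ (a), which I would handle in two steps. First, comparing the coefficient of $1$ on both sides of $g\varepsilon_C(H,K) = \varepsilon_C(H,K)$: the right-hand side has coefficient $[F(\chi):F]/|H| \neq 0$, while the left-hand side coefficient equals the coefficient of $g^{-1}$ in $\varepsilon_C(H,K)$, which vanishes whenever $g \notin H$; thus $g \in H$. Second, I extend the linear character $\chi \in C$ to a ring homomorphism $\tilde{\chi}\colon FH \to F(\chi)$ and apply it to the equation, obtaining $\chi(g)\tilde{\chi}(\varepsilon_C(H,K)) = \tilde{\chi}(\varepsilon_C(H,K))$. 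Since $\varepsilon_C(H,K)$ is idempotent, so is $\tilde{\chi}(\varepsilon_C(H,K)) \in F(\chi)$, hence it lies in $\{0,1\}$; a direct calculation using character orthogonality on $H/K$ and the fact that $\chi$ has trivial stabilizer in $\Gal(F(\chi)/F)$ shows it equals $1$. Therefore $\chi(g) = 1$, i.e., $g \in \ker\chi = K$.

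For Part (2), the hypothesis $H \unlhd N_G(K)$ gives $N := N_G(H) \cap N_G(K) = N_G(K)$, so $E_F(G, H/K) = \Stab_{N_G(K)}(C)$. The inclusion $\supseteq$ is immediate from equation (\ref{eq2}): if $g \in \Stab_N(C)$ then $H^g = H$, $K^g = K$, $C^g = C$, so $\varepsilon_C(H,K)^g = \varepsilon_{C^g}(H^g, K^g) = \varepsilon_C(H,K)$. For $\subseteq$, given $g \in \Cen_G(\varepsilon_C(H,K))$ I would conjugate $\suma{K}\varepsilon_C(H,K) = \varepsilon_C(H,K)$ by $g$ to obtain $\suma{K^g}\varepsilon_C(H,K) = \varepsilon_C(H,K)$; the subgroup-averaging fact used above, combined with Part (1), forces every $x \in K^g$ to lie in $K$, so $K^g = K$ by cardinality and $g \in N_G(K) = N$. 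Finally $\varepsilon_C(H,K)^g = \varepsilon_{C^g}(H,K) = \varepsilon_C(H,K)$ together with Corollary~\ref{cyclicquotient} gives $C^g = C$, so $g \in \Stab_N(C)$. The main obstacle is the (b) $\Rightarrow$ (a) step of Part (1), in particular pinning down $g \in H$ via the coefficient of $1$ and then evaluating $\tilde{\chi}$ on $\varepsilon_C(H,K)$; once these are in hand, Part (2) is straightforward manipulation of equation (\ref{eq2}) combined with the uniqueness in Corollary~\ref{cyclicquotient}.
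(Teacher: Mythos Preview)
Your proof is correct. The overall architecture matches the paper's, but your handling of the key step (b) $\Rightarrow$ (a) in Part~(1) follows a genuinely different line from the paper's argument.

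For (b) $\Rightarrow$ (a), the paper, after observing that $g\in H$ (via the support of $\varepsilon_C(H,K)$, essentially your coefficient-of-$1$ argument), writes $g = xh^t$ with $x\in K$ and expands both sides of $g\varepsilon_C(H,K)=\varepsilon_C(H,K)$ to obtain
\[
\tr_{F(\zeta_k)/F}\bigl((\chi(h^tK)-1)\chi(h^iK)\bigr)=0 \quad\text{for all } i,
\]
and then invokes nondegeneracy of the trace form (the images $\chi(h^iK)$ span $F(\zeta_k)$ over $F$) to force $\chi(h^tK)=1$. Your approach instead extends $\chi$ to an algebra map $\tilde\chi\colon FH\to F(\chi)$ and applies it directly; character orthogonality on $H/K$ gives $\tilde\chi(\varepsilon_C(H,K))=1$, whence $\chi(g)=1$. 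Your route is slightly cleaner and more conceptual, avoiding the explicit trace computation; the paper's route is more hands-on but also makes transparent \emph{why} the field being $F(\zeta_k)$ (rather than a proper subfield) matters. Both are short.

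For (b) $\Leftrightarrow$ (c), the paper simply says ``by comparing the coefficients''; your iterate-and-average argument is a more explicit version of the same observation. For Part~(2), your argument is essentially identical to the paper's: both show $g\in N_G(K)$ by moving $g$ past $\varepsilon_C(H,K)$ and applying Part~(1) to elements of $K^g$, then invoke equation~(\ref{eq2}) and the uniqueness statement in Corollary~\ref{cyclicquotient} to conclude $C^g=C$.
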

\begin{proof}
1. The fact that (\ref{s1}) implies (\ref{s2}) follows from the easy observation that $g\suma{K}=\suma{K}$ when $g\in K$. The equivalence between (\ref{s2}) and (\ref{s3}) follows by comparing the coefficients. It remains to prove that $g\in K$ when $g\varepsilon_C(H,K)=\varepsilon_C(H,K)$. Assume that $g\varepsilon_C(H,K)=\varepsilon_C(H,K)$. Because of Corollary~\ref{cyclicquotient}, $\varepsilon_C(H,K)$ is a primitive central idempotent in $FH$ and hence non-zero. Therefore, the support of $g\varepsilon_C(H,K)$ is a non-empty set in $H$ and $g\in H$. Hence we can write $g=xh^t$ for some $x\in K$ and $0\leq t\le k$. Now 
$$k\inv\suma{K}\sum_{i=0}^{k-1} \sum_{\psi\in C} \psi(h^iK)h^{t-i}=g\varepsilon_C(H,K)=\varepsilon_C(H,K)=k\inv\suma{K}\sum_{i=0}^{k-1} \sum_{\psi\in C} \psi(h^iK)h^{-i}$$ and hence 
$$\tr_{F(\zeta_k)/F}((\chi(h^{t}K)-1)\chi(h^iK))=\sum_{\psi\in C}(\psi(h^{t}K)-1)\psi(h^iK)=0$$ for every $0\leq i\leq k-1$ and some $\chi\in C$. Since $\chi$ is faithful, its image generates $F(\chi)=F(\zeta_k)$ as $F$-vector space. Since $ \tr_{F(\zeta_k)/F}:F(\zeta_k)\rightarrow F$ is $F$-linear and surjective, we deduce that $\chi(h^tK)=1$ and hence $k$ divides $t$. Therefore $t=0$ and $g\in K$.

2. When $H\unlhd N_G(K)$, clearly $E_F(G,H/K)\subseteq N_G(H)\cap N_G(K)\subseteq N_G(K)$. Furthermore, if $g\in \Cen_G(\varepsilon_C(H,K))$, then for each $x\in K$ 
$$g\inv xg\varepsilon_C(H,K)=g\inv x\varepsilon_C(H,K)g=\varepsilon_C(H,K),$$
by 1. Therefore $g\inv xg\in K$ and $\Cen_G(\varepsilon_C(H,K))\subseteq N_G(K)$. Take $g\in N_G(K)$. By Corollary~\ref{cyclicquotient}, $\varepsilon_C(H,K)$ and $\varepsilon_{C^g}(H,K)$ are two primitive central idempotents of $FH$ and they are equal if and only if $C=C^g$ (i.e. if $g\in E_F(G,H/K)$). By equation (\ref{eq2}), $\varepsilon_C(H,K)^g=\varepsilon_{C^g}(H,K)$. Hence $g\in \Cen_G(\varepsilon_C(H,K))$ if and only if $\varepsilon_C(H,K)^g=\varepsilon_C(H,K)$, if and only if $g\in E_F(G,H/K)$.
 \end{proof}

We recall from \cite{Jespers2003,Olivieri2004} the following definitions. If $N\unlhd G$, then 
\begin{eqnarray*}
\varepsilon(G,N) = \left\{ \begin{array}{ll} \widehat{N} & \mbox{if } N=G \\
\prod_{M/N\in\mathcal{M}(G/N)} (\widehat{N}-\widehat{M}) = \widehat{N} \prod_{M/N\in\mathcal{M}(G/N)} (1-\widehat{M})& \mbox{if } N\neq G \end{array} \right.,
\end{eqnarray*} 
where $\mathcal{M}(G/N)$ is the set of all minimal normal non-trivial subgroups of $G/N$. If $K\unlhd H\leq G$, then $e(G,H,K)$ denotes the sum of all $G$-conjugates of $\varepsilon(H,K)$; i.e. $e(G,H,K)=\sum_{t\in T} \varepsilon(H,K)^t$ where $T$ is a right transversal of $\Cen_G(\varepsilon_C(H,K))$ in $G$.

Clearly elements of $\Q G$ can also be seen as elements in $FG$. The following lemma tells how $\varepsilon(G,N)$ and $e(G,H,K)$ can be written as a sum of elements in $FG$.

\begin{lemma}\label{lemma2}
Let $F$ be a number field.
\begin{enumerate}
 \item Let $N\unlhd G$ be such that $G/N$ is cyclic, then $$\varepsilon(G,N)=\sum_{C\in\mathcal{C}_F(G/N)}\varepsilon_C(G,N).$$\\
\item Let $K\unlhd H\unlhd N_G(K)$ be such that $H/K$ is cyclic and let $R$ be a set of representatives of the action of $N_G(K)$ on $\mathcal{C}_F(H/K)$. Then 
$$e(G,H,K)=\sum_{C\in R}e_C(G,H,K).$$
\end{enumerate}
\end{lemma}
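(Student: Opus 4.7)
My strategy is to prove the two parts in order, letting (1) feed into (2).

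For (1), I would pass to the quotient via the ring isomorphism $\phi:FG\suma{N}\simeq F(G/N)$ that appears in the proof of Corollary~\ref{cyclicquotient}: under $\phi$, $\varepsilon(G,N)$ maps to $\varepsilon(G/N,1)=\prod_{M/N\in\mathcal{M}(G/N)}(1-\suma{M/N})$ and each $\varepsilon_C(G,N)$ maps to $\varepsilon_C(G/N,1)$, so it suffices to treat the case in which $G$ is cyclic and $N=1$. For a cyclic $G$ of order $k$, I would use that for any irreducible complex character $\chi$ of $G$ and any subgroup $M\leq G$ one has $e(\chi)\suma{M}=e(\chi)$ when $M\subseteq\Ker\chi$ and $e(\chi)\suma{M}=0$ otherwise. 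Expanding $\prod_M (1-\suma{M})$ over the minimal normal subgroups of $G$ then yields $e(\chi)\varepsilon(G,1)=e(\chi)$ exactly when $\chi$ is faithful and $0$ otherwise, so $\varepsilon(G,1)=\sum_{\chi\text{ faithful}}e(\chi)$. Grouping these faithful characters into $\Gal(F(\zeta_k)/F)$-orbits and applying equation~(\ref{eq1}) from the proof of Proposition~\ref{finiteabelian} rewrites the right-hand side as $\sum_{C\in\mathcal{C}_F(G)}\varepsilon_C(G,1)$, the desired identity.

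For (2), I would first use the hypothesis $H\unlhd N_G(K)$, together with the standard fact that $\Cen_G(\varepsilon(H,K))=N_G(H)\cap N_G(K)$, to conclude that $\Cen_G(\varepsilon(H,K))=N_G(K)$. Hence if $T$ is a right transversal of $N_G(K)$ in $G$, then $e(G,H,K)=\sum_{t\in T}\varepsilon(H,K)^t$. Inserting part~(1) gives
$$e(G,H,K)=\sum_{t\in T}\sum_{D\in\mathcal{C}_F(H/K)}\varepsilon_D(H,K)^t.$$
For each $C\in R$ fix a right transversal $S_C$ of $\Stab_{N_G(K)}(C)$ in $N_G(K)$; then by~(\ref{eq2}) one has $\sum_{s\in S_C}\varepsilon_C(H,K)^s=\sum_{D\in\Orb_{N_G(K)}(C)}\varepsilon_D(H,K)$. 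Since the orbits indexed by $R$ partition $\mathcal{C}_F(H/K)$, the double sum above regroups as
$$e(G,H,K)=\sum_{C\in R}\sum_{t\in T}\sum_{s\in S_C}\varepsilon_C(H,K)^{st}.$$
By Lemma~\ref{lemma}(2), $\Cen_G(\varepsilon_C(H,K))=E_F(G,H/K)=\Stab_{N_G(K)}(C)$, so the set $\{st:s\in S_C,\,t\in T\}$ is a right transversal of $\Cen_G(\varepsilon_C(H,K))$ in $G$. The innermost double sum is therefore precisely $e_C(G,H,K)$, which completes the identification.

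The main technical hurdle is the three-layered orbit bookkeeping in part~(2): one must align the transversal $T$ of $G/N_G(K)$, the transversal $S_C$ of $N_G(K)/\Stab_{N_G(K)}(C)$, and the combined transversal of $G/\Cen_G(\varepsilon_C(H,K))$. The identifications $\Cen_G(\varepsilon(H,K))=N_G(K)$ and $\Cen_G(\varepsilon_C(H,K))=\Stab_{N_G(K)}(C)$ (the latter being Lemma~\ref{lemma}(2)) are exactly what make these three layers compose correctly; once they are in place, the distinct $G$-conjugates appearing on the two sides match term by term.
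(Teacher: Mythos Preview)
Your proof is correct and follows essentially the same strategy as the paper. The only cosmetic difference is in part~(1): you compute $\varepsilon(G,1)=\sum_{\chi\text{ faithful}}e(\chi)$ at the level of $\C G$ and then collect the $e(\chi)$ into $\Gal(F(\zeta_k)/F)$-orbits via~(\ref{eq1}), whereas the paper works directly in $FG$, using Lemma~\ref{lemma}(1) to test which primitive central idempotents $\varepsilon_C(G,H)$ survive multiplication by $\varepsilon(G,1)$; in part~(2) your transversals $T$ and $S_C$ are exactly the paper's $T_N$ and $T_E$ (the paper even notes that $E_F(G,H/K)$ is independent of $C$, so a single $T_E$ suffices), and the chain of equalities is the same.
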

\begin{proof}
 1. Both $\varepsilon(G,N)$ and $\varepsilon_C(G,N)$ belong to $FG\suma{N}$ for every $C\in\mathcal{C}_F(H/K)$. By factoring out $N$ and using the isomorphism $FG\suma{N}\simeq F(G/N)$, we may assume without loss of generality that $N=1$ and $G$ is cyclic. By Proposition~\ref{finiteabelian}, every primitive central idempotent of $FG$ is of the form $\varepsilon_C(G,H)$ with $H\unlhd G$ and $C\in \mathcal{C}_F(G/H)$. Therefore $\varepsilon(G,1)$ is the sum of some elements $\varepsilon_C(G,H)$ and it is enough to prove that if $H\unlhd G$ and $C\in\mathcal{C}_F(G/H)$, then $\varepsilon(G,1)\varepsilon_C(G,H)\neq 0$ if and only if $H=1$.

If $C\in \mathcal{C}_F(G)$ and $1\neq x\in G$, then $(1-\suma{x})\varepsilon_C(G,1)\neq 0$ by Lemma~\ref{lemma}. Since $\varepsilon_C(G,1)$ is a primitive central idempotent, we have that $\varepsilon_C(G,1)=(1-\suma{x})\varepsilon_C(G,1)$. Since $\varepsilon(G,1)$ is the product of elements of the form $1-\suma{x}$ with $1\neq x\in G$, it follows that $\varepsilon(G,1)\varepsilon_C(G,1)=\varepsilon_C(G,1)\neq 0$. Conversely, if $1\neq H\leq G$, then there exists a $h\in H$ such that $\GEN{h}$ is a minimal non-trivial subgroup of $G$. Hence $\varepsilon(G,1)\varepsilon_C(G,H)=\varepsilon(G,1)(1-\suma{h})\varepsilon_C(G,H)=0$, by Lemma~\ref{lemma}. This finishes part 1 of the proof.

2. Let $N=N_G(H)\cap N_G(K)=N_G(K)$, $E=E_F(G,H/K)$, $T_N$ be a right transversal of $N$ in $G$, $T_E$ be a right transversal of $E$ in $N$. Then $\{hg\mid h\in T_E, g\in T_N\}$ is a right transversal of $E$ in $G$. By \cite[Proposition 3,3]{Olivieri2004}, we know that $N=\Cen_G(\varepsilon(H,K))$. Hence $e(G,H,K)=\sum_{g\in T_N} \varepsilon(H,K)^g$. Clearly, $\mathcal{C}_F(H/K)$ is the disjoint union of the sets $\{C^t\mid t\in T_E\}$ for $C$ running on $R$. Therefore,
\begin{eqnarray*}
 e(G,H,K) &=& \sum_{g\in T_N} \varepsilon(H,K)^g \\
&=& \sum_{g\in T_N}\sum_{C\in\mathcal{C}_F(H/K)} \varepsilon_C(H,K)^g\\
&=& \sum_{g\in T_N}\sum_{C\in R}\sum_{h\in T_E} \varepsilon_{C^h}(H,K)^g\\
&=& \sum_{C\in R}\sum_{g\in T_N}\sum_{h\in T_E} \varepsilon_{C}(H,K)^{hg}\\
&=& \sum_{C\in R} e_C(G,H,K).
\end{eqnarray*}
 \end{proof}

\section{The structure of the Wedderburn decomposition for strongly monomial groups}

\begin{definition}
A strong Shoda pair of $G$ is a pair $(H,K)$ of subgroups of $G$ satisfying the following conditions:
\begin{enumerate}
\item \label{SS1} $K\leq H \unlhd N_G(K)$,
\item \label{SS2} $H/K$ is cyclic and a maximal abelian subgroup of $N_G(K)/K$,
\item \label{SS3} for every $g\in G\setminus N_G(K)$, $\varepsilon(H,K)\varepsilon(H,K)^g=0$.
\end{enumerate}
\end{definition}

Let $\chi$ be an irreducible character of $G$. The character $\chi$ is said to be strongly monomial if there is a strong Shoda pair $(H,K)$ of $G$ and a linear character $\psi$ of $H$ with kernel $K$ such that $\chi=\psi^G$, the induced character on $G$. The group $G$ is strongly monomial if every irreducible character of $G$ is strongly monomial.

In the main theorem of this section we describe the simple components of the group algebra $FG$ provided by strong Shoda pairs. We show that a strong Shoda pair $(H,K)$ of $G$ that determines a primitive central idempotent $e(G,H,K)$ in $\Q G$, will also determine a primitive central idempotent $e_C(G,H,K)$ in $FG$ for $C\in\mathcal{C}_F(G/N)$. Lemma~\ref{lemma2} shows how $e(G,H,K)$ splits into a sum of primitive central idempotents of $FG$.

\begin{theorem}\label{main}
Let $G$ be a finite group and $F$ be a number field.
\begin{enumerate}
 \item Let $(H,K)$ be a strong Shoda pair of $G$ and $C\in\mathcal{C}_F(H/K)$. Let $[H:K]=k$ and $E=E_F(G,H/K)$. Then $e_C(G,H,K)$ is a primitive central idempotent of $FG$ and 
$$FGe_C(G,H,K)\simeq M_{[G:E]}\left(F\left(\zeta_{k}\right)*_{\tau}^{\sigma}E/H\right),$$ where $\sigma$ and $\tau$ are defined as follows. Let $\phi:E/H\rightarrow E/K$ be a left inverse of the projection $E/K\rightarrow E/H$. Then
\begin{eqnarray*}
\sigma_{gH}(\zeta_k) &=& \zeta_k^i, \mbox{ if } yK^{\phi(gH)}=y^iK ,\\
\tau(gH,g'H) &=& \zeta_k^j, \mbox{ if }  \phi(gg'H)^{-1}\phi(gH)\phi(g'H)=y^jK,
\end{eqnarray*}
for $gH,g'H\in E/H$ and integers $i$ and $j$.
\item Let $X$ be a set of strong Shoda pairs of $G$. If every primitive central idempotent of $\Q G$ is of the form $e(G,H,K)$ for $(H,K)\in X$, then every primitive central idempotent of $FG$ is of the form $e_C(G,H,K)$ for $(H,K)\in X$ and $C\in\mathcal{C}_F(H/K)$.
\end{enumerate}
\end{theorem}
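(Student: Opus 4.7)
I would prove the two parts separately, extending the classical arguments for $F=\Q$ from \cite{Olivieri2004}.

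For Part~1, Corollary~\ref{cyclicquotient} gives that $\varepsilon_C(H,K)$ is a primitive central idempotent of $FH$ with $FH\varepsilon_C(H,K)\simeq F(\zeta_k)$, and Lemma~\ref{lemma}(2) identifies $\Cen_G(\varepsilon_C(H,K))=E$. The crucial step is to show that $\varepsilon_C(H,K)^g\varepsilon_C(H,K)=0$ for every $g\in G\setminus E$, which I would split into two cases. If $g\notin N_G(K)$, the strong Shoda condition gives $\varepsilon(H,K)\varepsilon(H,K)^g=0$, equivalently $\varepsilon(H,K)^g\varepsilon(H,K)=0$ after $g$-conjugating the instance at $g^{-1}$; combined with $\varepsilon_C(H,K)=\varepsilon(H,K)\varepsilon_C(H,K)$ from Lemma~\ref{lemma2}(1) and its $g$-conjugate, this yields
$$\varepsilon_C(H,K)^g\varepsilon_C(H,K)=\varepsilon_C(H,K)^g\varepsilon(H,K)^g\cdot\varepsilon(H,K)\varepsilon_C(H,K)=0.$$
If $g\in N_G(K)\setminus E$, then $g\in N_G(H)\cap N_G(K)$ (since $H\unlhd N_G(K)$ by the strong Shoda definition), and equation~(\ref{eq2}) gives $\varepsilon_C(H,K)^g=\varepsilon_{C^g}(H,K)$ with $C^g\neq C$, so the product vanishes as two distinct primitive central idempotents of $FH$.

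Once orthogonality is established, for $T$ a right transversal of $E$ in $G$ the sum $e_C(G,H,K)=\sum_{t\in T}\varepsilon_C(H,K)^t$ consists of $[G:E]$ pairwise orthogonal conjugate idempotents, and a standard Peirce decomposition argument yields
$$FGe_C(G,H,K)\simeq M_{[G:E]}\bigl(\varepsilon_C(H,K)FG\varepsilon_C(H,K)\bigr).$$
For the corner algebra, $\varepsilon_C(H,K)g\varepsilon_C(H,K)=\varepsilon_C(H,K)\varepsilon_C(H,K)^{g^{-1}}g$ vanishes unless $g\in E$, so the corner equals $FE\varepsilon_C(H,K)$. Using the section $\phi$, I would decompose $FE\varepsilon_C(H,K)=\bigoplus_{gH\in E/H}F(\zeta_k)\cdot\phi(gH)\varepsilon_C(H,K)$ and read off the crossed product structure: conjugation by $\phi(gH)$ of the generator $y\varepsilon_C(H,K)\leftrightarrow\zeta_k$ produces $\sigma$, and expressing $\phi(gH)\phi(g'H)=\phi(gg'H)\cdot h$ with $h\in H$ satisfying $hK=y^jK$ yields $\tau$. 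Primitivity of $e_C(G,H,K)$ in $FG$ then follows from simplicity of the matrix algebra. The main obstacle is keeping the crossed product bookkeeping aligned with the stated formulas; everything else is a direct transposition of the rational case.

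For Part~2, I would use that extension of scalars from $\Q$ to $F$ only refines the Wedderburn decomposition: each primitive central idempotent of $\Q G$ is a sum of primitive central idempotents of $FG$. Lemma~\ref{lemma2}(2) combined with Part~1 produces precisely such a refinement $e(G,H,K)=\sum_{C\in R}e_C(G,H,K)$ for each $(H,K)\in X$, with each summand a primitive central idempotent of $FG$. Since by hypothesis the $e(G,H,K)$ for $(H,K)\in X$ exhaust the primitive central idempotents of $\Q G$, their refinements exhaust those of $FG$, proving the claim.
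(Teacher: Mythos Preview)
Your proposal is correct and follows essentially the same approach as the paper's proof: the same two-case orthogonality argument via Lemma~\ref{lemma2}(1) and Corollary~\ref{cyclicquotient}, the same identification of the corner $\varepsilon_C(H,K)FG\varepsilon_C(H,K)=FE\varepsilon_C(H,K)$ and its crossed product structure, and the same use of Lemma~\ref{lemma2}(2) for Part~2. The only point you leave implicit is why the crossed product $F(\zeta_k)*_{\tau}^{\sigma}E/H$ is simple; the paper makes explicit that $H/K$ maximal abelian in $N_G(K)/K$ (hence in $E/K$) forces the action $\sigma$ to be faithful, and then invokes \cite[Theorem~29.6]{Reiner1975}. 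Since you explicitly defer to the rational case for this step, your sketch is complete in spirit.
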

\begin{proof}
1. By Lemma~\ref{lemma}, $E=\Cen_G(\varepsilon_C(H,K))$ and by \cite[Proposition 3,3]{Olivieri2004}, $\Cen_G(\varepsilon(H,K)) = N_G(K)$. Let $T$ be a right transversal of $E$ in $G$, then $e_C(G,H,K)=\sum_{g\in T}\varepsilon_C(H,K)^g$.

In order to prove that $e_C(G,H,K)$ is an idempotent, it is enough to show that the $G$-conjugates of $\varepsilon_C(H,K)$ are orthogonal. For this we show that, if $g\in G\setminus E$, then $\varepsilon_C(H,K)\varepsilon_C(H,K)^g=0$. By Lemma~\ref{lemma2}, $$\varepsilon_C(H,K)\varepsilon_C(H,K)^g=\varepsilon_C(H,K)\varepsilon(H,K)\varepsilon(H,K)^g\varepsilon_C(H,K)^g,$$ which is zero by the definition of a strong Shoda pair whenever $g\notin N_G(K)$. If $g\in N_G(K)\setminus E$, then $\varepsilon_C(H,K)^g=\varepsilon_{C^g}(H,K)\neq\varepsilon_C(H,K)$ are two different primitive central idempotents of $FH$ by Corollary~\ref{cyclicquotient}. Hence $\varepsilon_C(H,K)\varepsilon_C(H,K)^g=0$. 

By Corollary~\ref{cyclicquotient}, $FH\varepsilon_C(H,K)\simeq F(\zeta_k)$. Also, $FE\varepsilon_C(H,K) =FH\varepsilon_C(H,K) *_{\tau}^{\sigma} E/H$ is a crossed product with homogeneous basis $\phi(E/H)$, where $\phi:E/H\rightarrow E/K$ is a left inverse of the projection $E/K\rightarrow E/H$. The action $\sigma$ and twisting $\tau$ are given by
\begin{eqnarray*}
&& \sigma:E/H\rightarrow \Aut(FH\varepsilon_C(H,K)): gH\mapsto (\alpha\varepsilon_C(H,K)\mapsto \phi(gH)\inv \alpha \varepsilon_C(H,K) \phi(gH)),\\
&& \tau: E/H\times E/H\rightarrow\U(FH\varepsilon_C(H,K)):(gH,g'H)\mapsto \phi(gg'H)\inv \phi(gH)\phi(g'H).
\end{eqnarray*}
Clearly, the isomorphism $FH\varepsilon_C(H,K)\simeq F(\zeta_k)$ extends to an $E/H$-graded isomorphism $$FE\varepsilon_C(H,K)=FH\varepsilon_C(H,K) *_{\tau}^{\sigma} E/H\simeq F(\zeta_k)*_{\tau'}^{\sigma'} E/H.$$ Since $H/K$ is maximal abelian in $N/K$ and hence also in $E/K$, the action $\sigma'$ is faithful and $FE\varepsilon_C(H,K)$ is simple (\cite[Theorem 29.6]{Reiner1975}). 

If $g\in G$, then the map $x\mapsto xg$ defines an isomorphism between the $FG$-modules $FG\varepsilon_C(H,K)$ and $FG\varepsilon_C(H,K)^g$. Therefore, $_{FG}FGe_C(G,H,K)=\bigoplus_{t\in T}FG\varepsilon_C(H,K)^t\simeq (FG\varepsilon_C(H,K))^{[G:E]}$. Moreover, $\varepsilon_C(H,K)FG\varepsilon_C(H,K)=\bigoplus_{t\in T}\varepsilon_C(H,K)FEt\varepsilon_C(H,K)=\bigoplus_{t\in T}FE\varepsilon_C(H,K)t\varepsilon_C(H,K)=FE\varepsilon_C(H,K)$, because $\varepsilon_C(H,K)$ is central in $FE$ and $\varepsilon_C(H,K)^t\varepsilon_C(H,K)=0$ for all $t\in G\setminus E$. Thus
\begin{eqnarray*}
 FGe_C(G,H,K)&\simeq& \End_{FG}(FGe_C(G,H,K))\simeq M_{[G:E]}(\End_{FG}(FG\varepsilon_C(H,K)))\\
&\simeq& M_{[G:E]}(\varepsilon_C(H,K)FG\varepsilon_C(H,K))=M_{[G:E]}(FE\varepsilon_C(H,K))\\
&\simeq& M_{[G:E]}(F(\zeta_k)*_{\tau'}^{\sigma'} E/H).
\end{eqnarray*}

2. By assumption there is a set $Y\subseteq X$ such that $\{e(G,H,K)\mid (H,K)\in Y\}$ is the set of primitive central idempotents of $\Q G$. Hence, by Lemma~\ref{lemma2}, $$1=\sum_{(H,K)\in Y}e(G,H,K)=\sum_{(H,K)\in Y}\sum_{C\in R_{(H,K)}}e_C(G,H,K),$$ for $R_{(H,K)}$ a set of representatives of the action of $N_G(K)$ on $\mathcal{C}_F(H/K)$. Furthermore, $e_C(G,H,K)$ are primitive central idempotents of $FG$ by (1). Hence $\{e_C(G,H,K)\mid (H,K)\in Y, C\in R_{(H,K)}\}$ is the set of primitive central idempotents of $FG$.
 \end{proof}

Applying the result of \cite[Theorem 4.4]{Olivieri2004}, we get the following result.

\begin{corollary}
 If $G$ is a finite strongly monomial group (e.g. a finite abelian-by-supersolvable group) and $F$ a number field. Then every primitive central idempotent if $FG$ is of the form $e_C(G,H,K)$ for a strong Shoda pair $(H,K)$ of $G$ and $C\in\mathcal{C}_F(H/K)$. Furthermore, for every strong Shoda pair $(H,K)$ of $G$ and every $C\in \mathcal{C}_F(H/K)$, $$FGe_C(G,H,K)\simeq M_{[G:E]}\left(F\left(\zeta_{[H:K]}\right)*_{\tau}^{\sigma}E/H\right),$$ where $\sigma$ and $\tau$ are defined as above and $E=E_F(G,H/K)$. 
\end{corollary}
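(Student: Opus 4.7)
The plan is to obtain the corollary as a direct application of Theorem~\ref{main} combined with the rationality result of Olivieri, del R\'io and Sim\'on. The key input from outside the present excerpt is \cite[Theorem 4.4]{Olivieri2004}, which asserts that for a finite strongly monomial group $G$ (a class that contains every abelian-by-supersolvable group), every primitive central idempotent of $\Q G$ is of the form $e(G,H,K)$ for some strong Shoda pair $(H,K)$ of $G$. Let $X$ denote the set of all strong Shoda pairs of $G$; then the hypothesis of part (2) of Theorem~\ref{main} is satisfied with this $X$.

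First I would invoke part (2) of Theorem~\ref{main} directly. Since every primitive central idempotent of $\Q G$ is $e(G,H,K)$ for some $(H,K)\in X$, the theorem guarantees that every primitive central idempotent of $FG$ is of the form $e_C(G,H,K)$ for some $(H,K)\in X$ and some $C\in\mathcal{C}_F(H/K)$. This gives the first (classification) assertion of the corollary without further work, and no additional choice of representatives for the action of $N_G(K)$ needs to be made here because the statement merely requires \emph{existence} of such a pair $(H,K)$ and $C$.

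Next I would apply part (1) of Theorem~\ref{main} to each such pair. Part (1) is proved for arbitrary strong Shoda pairs and does not require $G$ to be strongly monomial; once we know that $e_C(G,H,K)$ is a primitive central idempotent of $FG$, the description of the corresponding simple component as
\[
FGe_C(G,H,K)\simeq M_{[G:E]}\!\left(F\left(\zeta_{[H:K]}\right)*_{\tau}^{\sigma}E/H\right),
\]
with $E=E_F(G,H/K)$ and the action $\sigma$ and twisting $\tau$ defined in that theorem, is immediate. This yields the second assertion for every strong Shoda pair $(H,K)$ and every $C\in\mathcal{C}_F(H/K)$, as required.

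There is essentially no obstacle beyond correctly quoting the two inputs: the existing rational result of \cite{Olivieri2004} for the class of strongly monomial groups, and the number-field refinement already proved in Theorem~\ref{main}. The only subtlety worth noting in the write-up is the parenthetical remark that abelian-by-supersolvable groups are strongly monomial, which is itself part of \cite[Theorem 4.4]{Olivieri2004}, so no extra argument is needed there either.
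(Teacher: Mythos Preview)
Your proposal is correct and matches the paper's own approach exactly: the paper simply states that the corollary follows by applying \cite[Theorem 4.4]{Olivieri2004} to Theorem~\ref{main}, with no further argument given. Your write-up is in fact more detailed than the paper's, which omits an explicit proof.
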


Applying the result of \cite[Theorem 4.7]{Olivieri2004}, we get the following result.

\begin{corollary}\label{metabelian}
  If $G$ is a finite metabelian group, $A$ a maximal abelian subgroup of $G$ containing $G'$ and $F$ a number field. Then every primitive central idempotent of $FG$ is of the form $e_C(G,H,K)$ for a pair $(H,K)$ of subgroups of $G$ satisfying the following conditions: 
\begin{enumerate}
\item \label{metabelian1}$H$ is a maximal element in the set $\{B\leq G \mid A\leq B \mbox{ and } B'\leq K\leq B\}$;
\item \label{metabelian2}$H/K$ is cyclic;
\end{enumerate}
 and $C\in\mathcal{C}_F(H/K)$. Furthermore, for every pair $(H,K)$ of subgroups of $G$ satisfying (\ref{metabelian1}) and (\ref{metabelian2}) and every $C\in \mathcal{C}_F(H/K)$, $$FGe_C(G,H,K)\simeq M_{[G:E]}\left(F\left(\zeta_{[H:K]}\right)*_{\tau}^{\sigma}E/H\right),$$ where $\sigma$ and $\tau$ are defined as above and $E=E_F(G,H/K)$. 
\end{corollary}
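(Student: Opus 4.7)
The plan is to derive this corollary as a direct packaging of Theorem~\ref{main} together with the rational case treated by Olivieri, del R\'{\i}o and Sim\'on in \cite[Theorem 4.7]{Olivieri2004}. That theorem of \cite{Olivieri2004} says precisely that, for a finite metabelian group $G$ with a fixed maximal abelian subgroup $A$ containing $G'$, every pair $(H,K)$ of subgroups of $G$ satisfying conditions (\ref{metabelian1}) and (\ref{metabelian2}) is a strong Shoda pair of $G$, and moreover the elements $e(G,H,K)$ obtained from such pairs exhaust the primitive central idempotents of $\Q G$.

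Once this rational statement is in hand, the first part of the corollary is immediate from Theorem~\ref{main}(2). Indeed, let
\[
X=\bigl\{(H,K):H\text{ is maximal in }\{B\le G\mid A\le B,\ B'\le K\le B\}\text{ and }H/K\text{ is cyclic}\bigr\}.
\]
By \cite[Theorem 4.7]{Olivieri2004}, $X$ consists of strong Shoda pairs and $\{e(G,H,K):(H,K)\in X\}$ is the complete set of primitive central idempotents of $\Q G$. Applying Theorem~\ref{main}(2) to this $X$ yields at once that every primitive central idempotent of $FG$ is of the form $e_C(G,H,K)$ for some $(H,K)\in X$ and some $C\in\mathcal{C}_F(H/K)$; Lemma~\ref{lemma2}(2) is implicitly what guarantees that passing from $e(G,H,K)$ to its $F$-refinement partitions it into exactly these pieces, with no loss or gain.

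For the Wedderburn description, since each $(H,K)\in X$ is a strong Shoda pair, Theorem~\ref{main}(1) applies verbatim and produces the isomorphism
\[
FGe_C(G,H,K)\simeq M_{[G:E]}\bigl(F(\zeta_{[H:K]})*_{\tau}^{\sigma}E/H\bigr),
\]
with $E=E_F(G,H/K)$ and with $\sigma,\tau$ defined exactly as in the statement of Theorem~\ref{main}. There is no genuine obstacle to overcome here: the corollary is just the specialization of the general strongly monomial machinery of Theorem~\ref{main} to the metabelian setting, where \cite[Theorem 4.7]{Olivieri2004} supplies the concrete combinatorial description of the relevant strong Shoda pairs via the maximality condition (\ref{metabelian1}).
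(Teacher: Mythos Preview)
Your proposal is correct and follows exactly the route the paper takes: the corollary is stated in the paper with no detailed proof, only the remark that it follows by applying \cite[Theorem 4.7]{Olivieri2004} together with Theorem~\ref{main}. Your write-up simply spells out this two-line reduction, so there is nothing to add or correct.
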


\begin{remark}
The description in Theorem~\ref{main} agrees with the one given in \cite[Proposition 1]{2007Olteanu} and provides an alternative formula for the simple components in a group algebra over a number field for strongly monomial groups. Our formula has the advantage that it avoids induced characters and the computation of some fields of character values. Moreover, it contains more details about the crossed product. 
\end{remark}

\section{On the number of simple components}

In \cite{Ferraz2007}, Ferraz and Polcino Milies showed some conditions on when the number of simple components of finite semisimple abelian group algebras $\F G$ reaches the lower bound, i.e. when it coincide with the number of simple components of $\Q G$. We study the number of simple components of $F G$ for both number fields and finite fields (with $\textnormal{char}(F)\nmid |G|$) and investigate when this number is minimal for strongly monomial groups.

From now on we assume $G$ to be a finite strongly monomial group. We say that two strong Shoda pairs $(H_1,K_1)$ and $(H_2,K_2)$ of $G$ are equivalent when $e(G,H_1,K_1)=e(G,H_2,K_2)$, or equivalently if there is a $g\in G$ such that $H_1^g\cap K_2=K_1^g\cap H_2$ \cite{Olivieri2006}. We call a set $X$ of strong Shoda pairs of $G$ a complete set of representatives if $X$ contains at most one strong Shoda pair in each equivalence class and $\sum_{(H,K)\in X}e(G,H,K)=1$. In this way, we guarantee that the set $X$ associates to a complete and non-redundant set of primitive central idempotents for a strongly monomial group $G$.

For $\Q G$ the number of simple components clearly coincide with the number of equivalence classes on strong Shoda pairs. Because of Lemma~\ref{lemma2}, we know that for each strong Shoda pair $(H,K)$, $e(G,H,K)=\sum_{C\in R}e_C(G,H,K)$ where $R$ is a set of representatives of the action of $N_G(K)$ on $\mathcal{C}_F(H/K)$. Hence $e(G,H,K)$ decomposes in a certain number of primitive central idempotents, which is exactly the number of orbits of the action of $N_G(K)$ on $\mathcal{C}_F(H/K)$. This number of orbits is exactly 
$$\frac{|\mathcal{C}_F(H/K)|}{|\Orb_{N_G(K)}(C)|}=\frac{\phi([H:K])}{|I_{[H:K]}(F)|}\frac{|E_F(G,H/K)|}{|N_G(K)|} $$ for any $C\in \mathcal{C}_F(H/K)$. 

We have the following embeddings of fields:
\begin{center}
\begin{tikzpicture}[node distance=2cm]
\node(FzEH)                     	{$F(\zeta_{[H:K]})^{E_F(G,H/K)/H}$};
\node(Fz) 	[above left of=FzEH] 	{$F(\zeta_{[H:K]})$};
\node(F)	[below right of=FzEH]	{$F$};
\node(QzNH)	[below left of=FzEH]	{$\Q(\zeta_{[H:K]})^{N_G(K)/H}$};
\node(Qz)	[above left of=QzNH]	{$\Q(\zeta_{[H:K]})$};
\node(cap)	[below left of=F]	{$\Q(\zeta_{[H:K]})^{N_G(K)/H}\cap F$};
\node(Q)	[below right of=cap]		{$\Q$};

\draw(FzEH)	-- (Fz);
\draw(F)	-- (FzEH);
\draw(QzNH)	-- (FzEH);
\draw(QzNH)	-- (Qz);
\draw(QzNH)	-- (cap);
\draw(F)	-- (cap);
\draw(Q)	-- (cap);
\draw(Fz)	-- (Qz);
\end{tikzpicture}
\end{center}
Here we have used the notation $L^A$ to denote the subfield of the field $L$, fixed by the action of a group $A$ on $L$.
From this diagram it follows that 
\begin{eqnarray*}
 [\Q(\zeta_{[H:K]})^{N_G(K)/H}\cap F:\Q]
&=& \frac{[\Q(\zeta_{[H:K]})^{N_G(K)/H}:\Q]}{[\Q(\zeta_{[H:K]})^{N_G(K)/H}:\Q(\zeta_{[H:K]})^{N_G(K)/H}\cap F]}\\
&=& \frac{\phi([H:K])/[N_G(K):H]}{[F(\zeta_{[H:K]})^{E_F(G,H/K)/H}:F]}\\
&=& \frac{\phi([H:K])/[N_G(K):H]}{|I_{[H:K]}(F)|/[E_F(G,H/K):H]}.
\end{eqnarray*} 

Hence the following proposition follows.

\begin{proposition}\label{number}
 Let $G$ be a finite strongly monomial group and $F$ a number field. Let $X$ be a complete set of representatives of strong Shoda pairs of $G$. Then the number of simple components of $FG$ equals 
$$\sum_{(H,K)\in X}\frac{\phi([H:K])}{|I_{[H:K]}(F)|}\frac{|E_F(G,H/K)|}{|N_G(K)|}=\sum_{(H,K)\in X}[\Q(\zeta_{[H:K]})^{N_G(K)/H}\cap F:\Q].$$
\end{proposition}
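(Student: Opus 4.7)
The plan is to enumerate the primitive central idempotents of $FG$ using Theorem~\ref{main}(2) together with Lemma~\ref{lemma2}(2), then count them via orbit–stabilizer and Galois theory. Since $G$ is strongly monomial, a complete set $X$ of strong Shoda pair representatives produces, via $(H,K)\mapsto e(G,H,K)$, all primitive central idempotents of $\Q G$. By Theorem~\ref{main}(2) every primitive central idempotent of $FG$ has the form $e_C(G,H,K)$ with $(H,K)\in X$ and $C\in\mathcal{C}_F(H/K)$, and by Lemma~\ref{lemma2}(2) the distinct idempotents arising from a fixed $(H,K)$ are in bijection with the $N_G(K)$-orbits on $\mathcal{C}_F(H/K)$. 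Thus the total number of simple components of $FG$ equals $\sum_{(H,K)\in X}|\mathcal{C}_F(H/K)/N_G(K)|$.

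I would then evaluate this orbit count. The text has already shown that $\Stab_{N_G(K)}(C)=E_F(G,H/K)$ does not depend on $C$, so by orbit–stabilizer every orbit has the common size $[N_G(K):E_F(G,H/K)]$, and consequently the number of orbits is $|\mathcal{C}_F(H/K)|\cdot|E_F(G,H/K)|/|N_G(K)|$. Since $\mathcal{C}_F(H/K)$ corresponds to the orbits of $I_{[H:K]}(F)$ acting by multiplication on $\U(\Z/[H:K]\Z)$ (a free action), one has $|\mathcal{C}_F(H/K)|=\phi([H:K])/|I_{[H:K]}(F)|$. Substituting yields the first expression in the proposition.

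For the second equality I would extract the relevant field degrees directly from the diagram displayed just before the statement. The key identification is $F\cdot\Q(\zeta_{[H:K]})^{N_G(K)/H}=F(\zeta_{[H:K]})^{E_F(G,H/K)/H}$, which gives the compositum relation $[F(\zeta_{[H:K]})^{E_F(G,H/K)/H}:F]=[\Q(\zeta_{[H:K]})^{N_G(K)/H}:\Q(\zeta_{[H:K]})^{N_G(K)/H}\cap F]$. Combined with the Galois-theoretic degree formulas $[\Q(\zeta_{[H:K]})^{N_G(K)/H}:\Q]=\phi([H:K])/[N_G(K):H]$ and $[F(\zeta_{[H:K]})^{E_F(G,H/K)/H}:F]=|I_{[H:K]}(F)|/[E_F(G,H/K):H]$, multiplicativity of degrees in the tower $\Q\subseteq \Q(\zeta_{[H:K]})^{N_G(K)/H}\cap F\subseteq \Q(\zeta_{[H:K]})^{N_G(K)/H}$ reproduces exactly the first expression, establishing the second equality.

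The point requiring the most care is the compositum identification $F\cdot\Q(\zeta_{[H:K]})^{N_G(K)/H}=F(\zeta_{[H:K]})^{E_F(G,H/K)/H}$. This is resolved by unpacking the definition of $E_F(G,H/K)$: as a stabilizer, its image in $N_G(K)/H\hookrightarrow\Gal(\Q(\zeta_{[H:K]})/\Q)$ is precisely the preimage of $I_{[H:K]}(F)=\Gal(\Q(\zeta_{[H:K]})/\Q(\zeta_{[H:K]})\cap F)$, so fixing $E_F(G,H/K)/H$ inside $F(\zeta_{[H:K]})$ is equivalent to fixing $N_G(K)/H$ on the cyclotomic part while keeping $F$ pointwise. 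Everything else is bookkeeping with orbit–stabilizer and the tower law.
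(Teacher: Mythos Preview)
Your proposal is correct and follows essentially the same route as the paper: counting the primitive central idempotents via Lemma~\ref{lemma2}(2) as $N_G(K)$-orbits on $\mathcal{C}_F(H/K)$, applying orbit--stabilizer with the common stabilizer $E_F(G,H/K)$, and then reading off the second equality from the field diagram. Your final paragraph justifying the compositum identity $F\cdot\Q(\zeta_{[H:K]})^{N_G(K)/H}=F(\zeta_{[H:K]})^{E_F(G,H/K)/H}$ is in fact more explicit than what the paper writes; the paper simply asserts the degree computation ``follows from the diagram'' without isolating that step.
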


Clearly the number of simple components of $\Q G$ is a lower bound for the number of components of $F G$. We say that the number of components is minimal when it reaches this lower bound. For a strong Shoda pair $(H,K)$ with $H/K=\GEN{hK}$, let $\U_{(H,K)}=\{r\in \U(\Z/[H:K]\Z)\mid h^gh^{-r}\in K \mbox{ for some }g\in N_G(K)\}$.

\begin{proposition}\label{minimal}
 Let $G$ be a finite strongly monomial group and $F$ a number field. Let $X$ be a complete set of representatives of strong Shoda pairs of $G$. 
The following statements are equivalent:
\begin{enumerate}
 \item The number of simple components of $F G$ is minimal;
 \item For each pair $(H,K)\in X$: $\U(\Z/[H:K]\Z)=\GEN{I_{[H:K]}(F), \U_{(H,K)}}$;
 \item For each pair $(H,K)\in X$: $\phi([H:K])=|I_{[H:K]}(F)|\frac{|N_G(K)|}{|E_F(G,H/K)|}$ for any $C\in \mathcal{C}_F(H/K)$;
 \item For each pair $(H,K)\in X$: $[\Q(\zeta_{[H:K]})^{N_G(K)/H}\cap F:\Q]=1$.
\end{enumerate}
\end{proposition}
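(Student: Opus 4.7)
The plan is to treat Proposition~\ref{number} as the central computational input and compare the sums appearing there with the trivial lower bound $|X|$, which is precisely the number of simple components of $\Q G$ (each equivalence class of strong Shoda pairs contributes exactly one primitive central idempotent to $\Q G$). Since both sums in Proposition~\ref{number} are indexed by $X$ and each summand is a positive integer (one is an index $[\Q(\zeta_{[H:K]})^{N_G(K)/H}\cap F:\Q]$, the other is its reformulation via the same field diagram), the total is at least $|X|$, with equality iff every summand equals $1$. Identifying (3) as the statement that every summand of the first sum equals $1$ and (4) as the statement that every summand of the second sum equals $1$, this immediately yields $(1)\Leftrightarrow(3)\Leftrightarrow(4)$.

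The substantive content lies in the equivalence with $(2)$. I would fix a pair $(H,K)\in X$ and set $k=[H:K]$, $N=N_G(K)$, and $hK$ a generator of $H/K$. Because $H\unlhd N$ and $H/K$ is cyclic, conjugation defines a homomorphism $\alpha\colon N\to\U(\Z/k\Z)$ sending $g$ to the unique $r$ with $h^gK=h^rK$. The maximal-abelian clause of the strong Shoda pair definition forces $\ker\alpha=H$, so $\mathrm{image}(\alpha)=\U_{(H,K)}$ has order $[N:H]$. Next, unwinding the identification $\Stab_N(C)=\Stab_N(\overline{hK})$ established just before Definition~2.4, I would verify that $\alpha$ restricts to an isomorphism between $E_F(G,H/K)/H$ and $\U_{(H,K)}\cap I_k(F)$; consequently $|E_F(G,H/K)|=|H|\cdot|\U_{(H,K)}\cap I_k(F)|$.

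Substituting these cardinalities into the first summand of Proposition~\ref{number} and applying the standard identity $|A|\cdot|B|=|\langle A,B\rangle|\cdot|A\cap B|$ in the abelian group $\U(\Z/k\Z)$ with $A=I_k(F)$ and $B=\U_{(H,K)}$ rewrites the summand as
$$\frac{\phi(k)}{|\langle I_k(F),\U_{(H,K)}\rangle|}.$$
Since $|\U(\Z/k\Z)|=\phi(k)$ and $\langle I_k(F),\U_{(H,K)}\rangle$ is a subgroup of $\U(\Z/k\Z)$, this quotient equals $1$ if and only if $\langle I_k(F),\U_{(H,K)}\rangle=\U(\Z/k\Z)$, which is condition (2) for the pair $(H,K)$. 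Chaining this with the first paragraph closes the equivalence.

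The main obstacle I anticipate is the bookkeeping in the middle paragraph: one must carefully reconcile three compatible actions on $\U(\Z/k\Z)$ (multiplication by $I_k(F)$ coming from Galois, multiplication by $\U_{(H,K)}$ coming from conjugation, and their combination on $\mathcal{C}_F(H/K)$) and check that the image $\alpha(E_F(G,H/K))$ is exactly $\U_{(H,K)}\cap I_k(F)$. Both the equality $\ker\alpha=H$ and this image computation rest squarely on the maximal-abelian clause of a strong Shoda pair, so the argument is confined to this hypothesis and does not require any further structural assumption on $G$.
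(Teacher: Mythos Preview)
Your argument is correct. The equivalences $(1)\Leftrightarrow(3)\Leftrightarrow(4)$ are handled exactly as in the paper (each summand in Proposition~\ref{number} is a positive integer, so the total equals $|X|$ iff each summand equals $1$). The link with $(2)$, however, is obtained differently.

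The paper connects $(1)$ directly to $(2)$: for a fixed $(H,K)\in X$ and a fixed faithful linear character $\chi$ of $H/K$, it observes that $e(G,H,K)$ is primitive in $FG$ precisely when the $N_G(K)$-action on $\mathcal{C}_F(H/K)$ is transitive, and then rewrites transitivity as the statement that every $\chi^i$ (with $i\in\U(\Z/k\Z)$) arises as $(\chi^{g})^{\sigma}$ for some $g\in N_G(K)$ and $\sigma\in I_k(F)$, i.e.\ $\U(\Z/k\Z)=\GEN{I_k(F),\U_{(H,K)}}$. You instead connect $(2)$ to $(3)$ algebraically: using the conjugation homomorphism $\alpha\colon N\to\U(\Z/k\Z)$ with image $\U_{(H,K)}$ and kernel $H$ (the latter via the maximal-abelian clause), you identify $\alpha(E_F(G,H/K))=\U_{(H,K)}\cap I_k(F)$, and then the product formula $|A|\,|B|=|\GEN{A,B}|\,|A\cap B|$ in the abelian group $\U(\Z/k\Z)$ rewrites each summand of Proposition~\ref{number} as $\phi(k)/|\GEN{I_k(F),\U_{(H,K)}}|$. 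This gives a pleasant closed-form expression for the number of primitive central idempotents sitting over $e(G,H,K)$, namely the index $[\U(\Z/k\Z):\GEN{I_k(F),\U_{(H,K)}}]$, which is a little more information than the paper's proof extracts; on the other hand, the paper's transitivity argument makes the conceptual reason for $(1)\Leftrightarrow(2)$ more transparent without any counting.
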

\begin{proof}
For each $(H,K)\in X$, fix $h\in H$ such that $H/K=\GEN{hK}$ and fix a faithful irreducible character $\chi$ of $H/K$. Then any faithful linear character of $H/K$ is of the form $\chi^i$ for some $(i,[H:K])=1$.

The number of simple components of $F G$ is minimal if and only if for each strong Shoda pair $(H,K)\in X$ the idempotent $e(G,H,K)$ is already primitive in $F G$. This happens whenever the action of $N_G(K)$ on $\mathcal{C}_F(H/K)$ is transitive for any pair $(H,K)\in X$. This is equivalent to the existence of $\sigma_i\in I_{[H:K]}(F)$ and $g_i\in N_G(K)$ for any $i$ with $(i,[H:K])=1$ such that $(\chi^{g_i})^{\sigma_i}=\chi^i$. The above means that $\U(\Z/[H:K]\Z)=\GEN{I_{[H:K]}(F), \U_{(H,K)}}$, which proves the equivalence between the first two statements. The equivalence with the third and fourth statement follows from Proposition~\ref{number}.
 \end{proof}

The following sufficient condition follows immediately.
\begin{corollary}\label{sufficient}
Let $G$ be a finite strongly monomial group and $F$ a number field. Let $X$ be a complete set of representatives of strong Shoda pairs of $G$. 
If $\phi([H:K])=|I_{[H:K]}(F)|$ for each $(H,K)\in X$, then the number of simple components of $F G$ is minimal.
\end{corollary}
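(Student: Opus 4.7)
The plan is to deduce the corollary directly from the equivalence of statements (1) and (3) in Proposition~\ref{minimal}. More precisely, I would argue that the hypothesis $\phi([H:K])=|I_{[H:K]}(F)|$ forces the ratio $|N_G(K)|/|E_F(G,H/K)|$ appearing in condition (3) to equal $1$, which then reduces that condition to the hypothesis itself.

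First I would observe that $|\mathcal{C}_F(H/K)|=\phi([H:K])/|I_{[H:K]}(F)|$. This is because $\mathcal{C}_F(H/K)$ is, by definition, the set of orbits of the $\phi([H:K])$ faithful characters of $H/K$ (equivalently, of the $\phi([H:K])$ elements of $\U(\Z/[H:K]\Z)$) under the free action of $I_{[H:K]}(F)$ by multiplication, as established in the correspondence set up at the end of Section~1. Consequently, the hypothesis immediately yields $|\mathcal{C}_F(H/K)|=1$.

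Next, since $N_G(K)$ acts on the singleton $\mathcal{C}_F(H/K)$, this action is necessarily trivial, so the stabilizer $E_F(G,H/K)$ of the unique class must coincide with all of $N_G(K)$. Substituting $|E_F(G,H/K)|=|N_G(K)|$ into condition (3) of Proposition~\ref{minimal} gives
$$\phi([H:K]) \;=\; |I_{[H:K]}(F)|\cdot\frac{|N_G(K)|}{|E_F(G,H/K)|} \;=\; |I_{[H:K]}(F)|,$$
which is exactly the hypothesis. Hence condition (3) holds for every $(H,K)\in X$, and by the equivalence with (1) the number of simple components of $FG$ is minimal.

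There is no real obstacle: once one recognizes that the hypothesis controls the cardinality of $\mathcal{C}_F(H/K)$, the statement is a one-line combinatorial consequence of Proposition~\ref{minimal}. The only subtlety worth flagging is the identification of $\mathcal{C}_F(H/K)$ with the orbit set of $I_{[H:K]}(F)$ acting on $\U(\Z/[H:K]\Z)$, which is precisely the setup recorded in Section~1 and hence available without extra work.
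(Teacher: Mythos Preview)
Your proof is correct and follows essentially the same approach as the paper: both arguments establish $E_F(G,H/K)=N_G(K)$ under the hypothesis and then invoke condition~(3) of Proposition~\ref{minimal}. The only cosmetic difference is that the paper verifies this equality directly (noting that $g^{-1}hgK=h^iK$ with $i\in\U(\Z/k\Z)=I_k(F)$ for every $g\in N_G(K)$), whereas you obtain it by the counting observation $|\mathcal{C}_F(H/K)|=1$.
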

\begin{proof}
 Let $(H,K)$ be a strong Shoda pair of $G$ such that $\phi([H:K])=|I_{[H:K]}(F)|$ and denote $k=[H:K]$ and $hK$ a generator of $H/K$. Then $\Gal(F(\zeta_k)/F)=\Gal(\Q(\zeta_k)/\Q)$. Since $g^{-1}hg \in H$ for all $g\in N_G(K)$ and the order of $g^{-1}hgK$ is the same as the order of $hK$, it follows that $g^{-1}hgK=h^iK$ for some $i\in \U(\Z/k\Z)=I_k(F)$ and $E_F(G,H/K)=N_G(K)$. Now the statement follows from Proposition~\ref{minimal}.
 \end{proof}

However, the previous condition is not necessary. 
\begin{example}\label{D6}
 Let $G=D_6=\GEN{a,b\mid a^ 3=1,b^2=1,bab=a^2}$ and $F=\Q(\zeta_3)$. Take $H=\GEN{a}$ and $K=1$, then $(H,K)$ is a strong Shoda pair of $G$ such that $N_G(K)=G$, $I_{[H:K]}(F)=1$ and $E_F(G,H/K)=\GEN{a}$. Clearly, $|I_{[H:K]}(F)|\neq \phi([H:K])=2$ but still $\frac{\phi([H:K])}{|I_{[H:K]}(F)|}\frac{|E_F(G,H/K)|}{|N_G(K)|}=1$.
\end{example}

Also, it is clear that $E_F(G,H/K)=N_G(K)$ provided $\phi([H:K])=|I_{[H:K]}(F)|$, however the opposite is not valid. 
\begin{example}
Let $G=C_3\times Q_8=\GEN{a,x,y\mid a^3=1, x^4=1, y^2=1, ax=xa, ay=ya, yxy=x^3}$ and $F=\Q(\zeta_3)$. Then $(\GEN{ax},1)$ is a strong Shoda pair of $G$ and $I_{12}(F)=\{1,7\}\neq \U(\Z/12\Z)$ and still $E_F(G,\GEN{ax})=G=N_G(1)$. 
\end{example}

\begin{lemma}\label{galois2}
 Let $F$ be a number field and $n$ and $m$ integers such that $n$ divides $m$. If $\phi(m)=|I_{m}(F)|$, then $\phi(n)=|I_{n}(F)|$.
\end{lemma}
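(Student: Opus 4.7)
The plan is to reinterpret the hypothesis in terms of the classical Galois-theoretic identity $|I_m(F)| = [F(\zeta_m):F]$, which is already implicit in the commutative diagram displayed in the introduction. Since $\phi(m) = [\Q(\zeta_m):\Q] = |\U(\Z/m\Z)|$, the equality $\phi(m) = |I_m(F)|$ is equivalent to $I_m(F) = \U(\Z/m\Z)$, i.e.\ to the restriction map $\Gal(\Q(\zeta_m)/\Q) \hookleftarrow \Gal(F(\zeta_m)/F)$ being an isomorphism; and by standard Galois theory this is in turn equivalent to $F \cap \Q(\zeta_m) = \Q$.

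First I would record this reformulation explicitly. Then, since $n \mid m$, I would use the tower $\Q \subseteq \Q(\zeta_n) \subseteq \Q(\zeta_m)$ to conclude
$$F \cap \Q(\zeta_n) \subseteq F \cap \Q(\zeta_m) = \Q,$$
hence $F \cap \Q(\zeta_n) = \Q$. Applying the same equivalence in the reverse direction with $n$ in place of $m$, this yields $I_n(F) = \U(\Z/n\Z)$, i.e.\ $\phi(n) = |I_n(F)|$.

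As an alternative phrasing (which I may mention briefly), one can argue directly with the restriction map: since $F(\zeta_n)/F$ is Galois and contained in $F(\zeta_m)/F$, the restriction $\Gal(F(\zeta_m)/F) \twoheadrightarrow \Gal(F(\zeta_n)/F)$ is surjective, and it corresponds under the inclusions $I_m(F) \hookrightarrow \U(\Z/m\Z)$ and $I_n(F) \hookrightarrow \U(\Z/n\Z)$ to the natural reduction $\U(\Z/m\Z) \twoheadrightarrow \U(\Z/n\Z)$. Therefore $I_n(F)$ is the image of $I_m(F)$ under reduction modulo $n$; if $I_m(F)$ is the whole of $\U(\Z/m\Z)$, its image is the whole of $\U(\Z/n\Z)$.

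There is no real obstacle here: the statement is essentially the observation that \emph{``being linearly disjoint from $\Q(\zeta_m)$ over $\Q$''} passes to subfields $\Q(\zeta_n)$ with $n \mid m$. The only point requiring a little care is justifying the identification of $I_n(F)$ as the image of $I_m(F)$ under the reduction map, which is exactly the compatibility already encoded in the commutative diagram recalled in Section~1.
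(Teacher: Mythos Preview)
Your proposal is correct. In fact, the ``alternative phrasing'' you sketch at the end is precisely the paper's own proof: the paper writes down the commutative square with $\Gal(F(\zeta_m)/F)\twoheadrightarrow\Gal(F(\zeta_n)/F)$ on the left and the reduction $\U(\Z/m\Z)\twoheadrightarrow\U(\Z/n\Z)$ on the right, observes that the hypothesis makes the bottom inclusion an equality, and concludes from surjectivity that the top inclusion is an equality too.

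Your primary route via $F\cap\Q(\zeta_m)=\Q$ is equally valid but slightly different in spirit: it anticipates the very next lemma in the paper (Lemma~\ref{galois}), which records exactly the equivalence $|I_k(F)|=\phi(k)\Longleftrightarrow F\cap\Q(\zeta_k)=\Q$. So in the paper's logical order your main argument would be forward-referencing; the diagram-chase avoids this by staying purely at the level of group homomorphisms. Either way the content is the same elementary Galois-theoretic fact.
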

\begin{proof}
Because of the Chinese Remainder Theorem, there is a surjective mapping from $\U(\Z/m\Z)$ to $\U(\Z/n\Z)$. Assume that $\phi(m)=|I_{m}(F)|$.
We have the following commutative diagram of sets

$$\SelectTips{cm}{}
\xymatrix{
\Gal(F(\zeta_n)/F) \hspace{2mm}\ar@{^{(}->}[r] & \hspace{1mm} \U(\Z/n\Z) \\ 
\Gal(F(\zeta_m)/F) \ar@{->>}[u] \ar@{=}[r] & \U(\Z/m\Z). \ar@{->>}[u]
}$$
It follows that $|I_n(F)|=|\Gal(F(\zeta_n)/F)|=|\U(\Z/n\Z)|=\phi(n)$.
 \end{proof}

\begin{lemma}\label{galois}
 Let $k$ be an integer and $F$ a number field. Then $|I_k(F)|=\phi(k)$ if and only $F\cap \Q(\zeta_{k}) = \Q$.
\end{lemma}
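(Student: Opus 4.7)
The plan is to translate both sides of the equivalence into statements about degrees of field extensions, and then invoke the standard Galois-theoretic description of a compositum with an abelian extension.

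First I would unpack the definition: by the commutative diagram at the start of the paper, $I_k(F)$ is the image of $\Gal(F(\zeta_k)/F)$ in $\U(\Z/k\Z)\simeq \Gal(\Q(\zeta_k)/\Q)$, and this image is obtained from the restriction homomorphism $\sigma\mapsto \sigma|_{\Q(\zeta_k)}$. Since the cyclotomic extension $\Q(\zeta_k)/\Q$ is Galois, so is $F(\zeta_k)/F$, and $|I_k(F)| = |\Gal(F(\zeta_k)/F)| = [F(\zeta_k):F]$. On the other hand $\phi(k) = |\U(\Z/k\Z)| = [\Q(\zeta_k):\Q]$. Thus the identity $|I_k(F)|=\phi(k)$ is equivalent to $[F(\zeta_k):F] = [\Q(\zeta_k):\Q]$.

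Next I would apply the standard result that because $\Q(\zeta_k)/\Q$ is Galois, the restriction map
\begin{equation*}
\Gal(F(\zeta_k)/F)\longrightarrow \Gal(\Q(\zeta_k)/F\cap \Q(\zeta_k)),\qquad \sigma\mapsto \sigma|_{\Q(\zeta_k)},
\end{equation*}
is an isomorphism (this is the natural isomorphism for the compositum $F(\zeta_k)=F\cdot \Q(\zeta_k)$). Consequently,
\begin{equation*}
[F(\zeta_k):F] = [\Q(\zeta_k):F\cap \Q(\zeta_k)].
\end{equation*}

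Finally, I would compare this with $[\Q(\zeta_k):\Q]$: the equality $[\Q(\zeta_k):F\cap \Q(\zeta_k)] = [\Q(\zeta_k):\Q]$ holds precisely when $F\cap \Q(\zeta_k) = \Q$. Combining this with the first paragraph gives the equivalence $|I_k(F)|=\phi(k)\iff F\cap \Q(\zeta_k)=\Q$, completing the proof. There is no real obstacle here beyond citing (or briefly recalling) the isomorphism $\Gal(F(\zeta_k)/F)\simeq \Gal(\Q(\zeta_k)/F\cap \Q(\zeta_k))$; everything else is bookkeeping with the indices of the two cyclotomic extensions.
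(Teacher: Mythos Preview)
Your proof is correct and follows essentially the same approach as the paper: both arguments hinge on the natural isomorphism $\Gal(F(\zeta_k)/F)\simeq \Gal(\Q(\zeta_k)/F\cap \Q(\zeta_k))$ for the compositum $F(\zeta_k)=F\cdot\Q(\zeta_k)$, and then compare sizes (or, equivalently, degrees) to obtain the equivalence. The paper's version is slightly terser, phrasing things directly in terms of Galois groups rather than field degrees, but the substance is identical.
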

\begin{proof}
Because of the Fundamental Theorem of Galois, $\Gal(F(\zeta_k)/F)\simeq \Gal(\Q(\zeta_k)/F\cap \Q(\zeta_k))$.
Assume that $|I_k(F)|=\phi(k)$, then $I_k(F)=\U(\Z/k\Z)$ or equivalently, $\Gal(\Q(\zeta_k)/F\cap \Q(\zeta_k))\simeq\Gal(F(\zeta_k)/F)\simeq\Gal(\Q(\zeta_k)/\Q)$. This is equivalent with $F\cap \Q(\zeta_{k}) = \Q$.
 \end{proof}

\begin{corollary}
  Let $G$ be a finite strongly monomial group of exponent $e$ and $F$ a number field. 
If $\phi(e)=|I_{e}(F)|$ then the number of simple components of $F G$ is minimal.
\end{corollary}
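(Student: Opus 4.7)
The plan is to reduce directly to Corollary~\ref{sufficient} by showing that the hypothesis $\phi(e)=|I_e(F)|$ propagates to every index $[H:K]$ that appears for a strong Shoda pair of $G$. The key observation is divisibility: for any strong Shoda pair $(H,K)$ of $G$, the quotient $H/K$ is cyclic, so its order $[H:K]$ equals the order of a generator $hK$. This order divides the order of $h$ in $G$, which in turn divides the exponent $e$. Hence $[H:K]\mid e$ for every $(H,K)\in X$, where $X$ is a complete set of representatives of strong Shoda pairs of $G$.

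Next I would invoke Lemma~\ref{galois2} with $m=e$ and $n=[H:K]$. Since the hypothesis gives $\phi(e)=|I_e(F)|$, Lemma~\ref{galois2} yields $\phi([H:K])=|I_{[H:K]}(F)|$ for each $(H,K)\in X$. This is precisely the hypothesis of Corollary~\ref{sufficient}, so applying that corollary immediately gives that the number of simple components of $FG$ is minimal.

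There is no real obstacle; the only thing worth a sentence is the divisibility $[H:K]\mid e$, which is automatic from the definition of the exponent once one notes that $H/K$ is cyclic and therefore $[H:K]$ is the order of a single element of $G$ (namely any lift of a generator of $H/K$). After that, the corollary follows by two invocations of earlier results.
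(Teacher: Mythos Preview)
Your proposal is correct and follows essentially the same route as the paper: observe that $[H:K]\mid e$ because $H/K$ is cyclic, invoke Lemma~\ref{galois2} to transfer the hypothesis from $e$ to each $[H:K]$, and then apply Corollary~\ref{sufficient}. The only difference is that you spell out the divisibility $[H:K]\mid e$ via a lift of a generator, whereas the paper states it in one line.
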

\begin{proof}
 For each strong Shoda pair $(H,K)$ of $G$, the group $H/K$ is cyclic. Therefore, the index $[H:K]$ is a divisor of the exponent of $G$. 
Because of Lemma~\ref{galois2}, $|I_n(F)|=\phi(n)$. Hence the result follows from Corollary~\ref{sufficient}.
 \end{proof}

When we have more details in the structure of the group itself, we can give general conditions for the number of components being minimal.

%

\begin{corollary}\label{abelian}
 Let $G$ be a finite abelian group of exponent $e$ and $F$ a number field. The number of simple components of $F G$ is minimal if and only if $\U(\Z/e\Z)=I_e(F)$.
\end{corollary}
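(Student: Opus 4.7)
The plan is to apply Proposition~\ref{minimal} after identifying the strong Shoda pairs of $G$ and simplifying the criterion drastically in the abelian case. Since $G$ is abelian, for any subgroup $K$ we have $N_G(K)=G$, and the only maximal abelian subgroup of $G/K$ is $G/K$ itself; hence condition (SS2) forces $H=G$, while condition (SS3) is vacuous. Therefore the strong Shoda pairs of $G$ are precisely the pairs $(G,N)$ with $N\unlhd G$ and $G/N$ cyclic, and two such pairs $(G,N_1)$ and $(G,N_2)$ are equivalent exactly when $N_1=N_2$. Consequently a complete set of representatives $X$ consists of all such pairs $(G,N)$.

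Next I would simplify the set $\U_{(G,N)}$ appearing in Proposition~\ref{minimal}. Because $G$ is abelian, $h^g=h$ for every $g\in N_G(N)=G$, so the defining condition $h^gh^{-r}\in N$ becomes $h^{1-r}\in N$. Since $hN$ generates $G/N$ of order $[G:N]$, this forces $r\equiv 1\pmod{[G:N]}$. Hence $\U_{(G,N)}=\{1\}$ inside $\U(\Z/[G:N]\Z)$, and criterion (2) of Proposition~\ref{minimal} reduces to asking that $I_{[G:N]}(F)=\U(\Z/[G:N]\Z)$ for every $(G,N)\in X$.

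For the implication $(\Leftarrow)$, assume $\U(\Z/e\Z)=I_e(F)$. For any $(G,N)\in X$ the order $[G:N]$ divides the exponent $e$, so Lemma~\ref{galois2} gives $|I_{[G:N]}(F)|=\phi([G:N])$, i.e.\ $I_{[G:N]}(F)=\U(\Z/[G:N]\Z)$, as required. For the converse $(\Rightarrow)$, invoke the invariant factor decomposition $G\cong C_{d_1}\times\cdots\times C_{d_r}$ with $d_1\mid\cdots\mid d_r=e$ to exhibit $N_0=C_{d_1}\times\cdots\times C_{d_{r-1}}\times\{1\}$ with $G/N_0\cong C_e$. The pair $(G,N_0)$ belongs to $X$, and applying the equivalent condition from the previous paragraph to it yields precisely $\U(\Z/e\Z)=I_e(F)$.

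The argument is direct once the strong Shoda pairs of an abelian group are pinned down; the only place where the abelian hypothesis is genuinely used is in collapsing $\U_{(G,N)}$ to $\{1\}$, which is essential because for non-abelian strongly monomial groups $\U_{(H,K)}$ can be larger than $\{1\}$ (compare Example~\ref{D6}). No serious obstacle is anticipated beyond this verification.
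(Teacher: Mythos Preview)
Your proposal is correct and follows essentially the same route as the paper: identify the strong Shoda pairs of an abelian group as the pairs $(G,N)$ with $G/N$ cyclic, observe that $\U_{(G,N)}=\{1\}$ because $G$ is abelian, and then reduce Proposition~\ref{minimal}(2) to $I_{[G:N]}(F)=\U(\Z/[G:N]\Z)$ for all such $N$, handling the two implications via Lemma~\ref{galois2} and the existence of a cyclic quotient of order $e$. The only difference is cosmetic---you spell out more of the justification (why $H=G$, the explicit invariant factor decomposition for the converse) than the paper does.
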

\begin{proof}
 When $G$ is abelian, any strong Shoda pair is of the form $(G,N)$ with $G/N$ cyclic. Since $G$ is abelian, clearly $\U_{(G,N)}=1$ for any strong Shoda pair. 

By Proposition~\ref{minimal}, it is sufficient to check that $\U(\Z/e\Z)=I_e(F)$ if and only if $\U(\Z/[G:N]\Z)=I_{[G:N]}(F)$ for each $N$ such that $G/N$ is cyclic. 

Suppose that $\U(\Z/e\Z)=I_e(F)$. Let $(G,N)$ be a strong Shoda pair. Then $[G:N]$ divides $e$. By Lemma~\ref{galois2}, it follows that $\U(\Z/[G:N]\Z)=I_{[G:N]}(F)$.

On the other hand, suppose that $\U(\Z/[G:N]\Z)=I_{[G:N]}(F)$ for each $N$ such that $G/N$ is cyclic. Since $G$ is abelian and $e$ is the exponent of $G$, there exists a subgroup $N$ of $G$ such that $G/N$ is cyclic of order $e$. It follows that $\U(\Z/e\Z)=I_e(F)$. 
 \end{proof}

Corollary~\ref{metabelian} allows one to easily compute the primitive central idempotents of the group algebra of a finite metacyclic group. Every finite metacyclic group $G$ has a presentation of the form $$G=\GEN{a}_m\rtimes_k \GEN{b}_n = \GEN{a,b\mid a^m=1,\ b^n=a^t,\ a^b=a^r},$$ where $m,n,t,r$ are integers satisfying the conditions $r^n \equiv 1 \mod m$, $m\mid t(r-1)$ and $o_m(r)=\frac{n}{k}$. For every $d\mid \frac{n}{k}$, let $G_d=\langle a,b^d\rangle$. 
The primitive central idempotents of $F G$ are the elements of the form $e_C(G,G_d,K)$ where $C\in \mathcal{C}_F(G_d/K)$, $d$ is a divisor of $\frac{n}{k}$ and $K$ is a subgroup of $G_d$ satisfying the following conditions:
\begin{enumerate}
\item $d=\min\{x\mid \frac{n}{k} : a^{r^x-1}\in K\}$,
\item $G_d/K$ is cyclic.
\end{enumerate}

%
%
%
%
%

This description allows us to classify when the number of simple components is minimal for some particular classes of metacyclic groups.

\begin{corollary}
 Let $F$ be a number field and $G=\GEN{a}_m\rtimes \GEN{b}_p$ with $p$ a prime which does not divide the integer $m$. Assume that $b^{-1}ab=a^r$ with $(r-1,m)=1$. Then the number of simple components of $FG$ is minimal if and only if $\Q(\zeta_p)\cap F=\Q$ (or equivalently $|I_p(F)|=p-1$) and $\Q(\zeta_m)^{\GEN{b}}\cap F=\Q$. Here $\GEN{b}$ acts on $\Q(\zeta_m)$ by the rule $b\cdot \zeta_m=\zeta_m^r$ and we denote by $\Q(\zeta_m)^{\GEN{b}}$ the fixed subfield of $\Q(\zeta_m)$ under the action of $\GEN{b}$.
\end{corollary}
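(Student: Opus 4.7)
The plan is to apply Proposition~\ref{minimal}(4), which characterises minimality by the conditions $\Q(\zeta_{[H:K]})^{N_G(K)/H}\cap F=\Q$ for each pair $(H,K)$ in a complete set $X$ of representatives of strong Shoda pairs of $G$. I will first enumerate $X$ using the metacyclic description of primitive central idempotents recalled just before the corollary, then evaluate the Galois condition on each element, and finally collapse the resulting family of conditions into the two stated in the corollary.

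For the enumeration I will use that $(r-1,m)=1$ forces $r\not\equiv 1\pmod m$ (apart from the degenerate case $m=1$), so $o_m(r)$ is a divisor of the prime $p$ distinct from $1$ and hence equal to $p$; thus $k=n/o_m(r)=1$ and $n/k=p$. The divisors of $n/k$ are $1$ and $p$, giving $G_1=G$ and $G_p=\GEN{a}$. Since $(r-1,m)=1$ implies that $a^{r-1}$ generates $\GEN{a}$, the condition $d=\min\{x\mid p:a^{r^x-1}\in K\}$ forces the strong Shoda pairs to be exactly $(G,G)$ and $(G,\GEN{a})$ (from $d=1$) together with $(\GEN{a},K)$ for each proper subgroup $K$ of $\GEN{a}$ (from $d=p$). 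Since $\GEN{a}$ is abelian and normal in $G$, every such $K$ is normal in $G$, and I will use this to verify that these pairs are pairwise non-equivalent (the equivalence relation involves $G$-conjugation on $K$, which fixes all $K\subseteq\GEN{a}$), so that they form a valid set $X$.

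Next I will evaluate the Galois-theoretic condition pair by pair. The pair $(G,G)$ contributes trivially since $[H:K]=1$. For $(G,\GEN{a})$ one has $[H:K]=p$ and $N_G(K)/H=G/G=1$, so the condition becomes $\Q(\zeta_p)\cap F=\Q$, which is equivalent to $|I_p(F)|=p-1$ by Lemma~\ref{galois}. For $(\GEN{a},K)$ with $e=[\GEN{a}:K]>1$, I will use that $N_G(K)=G$ and that $N_G(K)/H=G/\GEN{a}$ is cyclic of order $p$ generated by the image of $b$; identifying $\GEN{a}/K$ with $\Z/e\Z$, the induced action on the primitive $e$-th roots of unity is $\zeta_e\mapsto\zeta_e^r$, so the condition reads $\Q(\zeta_e)^{\GEN{b}}\cap F=\Q$ for every divisor $e>1$ of $m$.

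Finally I will collapse this family into the single condition $\Q(\zeta_m)^{\GEN{b}}\cap F=\Q$. The point is that $\zeta_e=\zeta_m^{m/e}$, so the action $\zeta_m\mapsto\zeta_m^r$ restricts to $\zeta_e\mapsto\zeta_e^r$, giving $\Q(\zeta_e)^{\GEN{b}}=\Q(\zeta_m)^{\GEN{b}}\cap\Q(\zeta_e)\subseteq\Q(\zeta_m)^{\GEN{b}}$; hence the condition for $e=m$ implies it for every $e\mid m$, and conversely the case $e=m$ is precisely the stated condition. Combined with $\Q(\zeta_p)\cap F=\Q$ arising from the pair $(G,\GEN{a})$, this yields the corollary. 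The main obstacle will be the careful enumeration and non-equivalence check for the strong Shoda pairs; once this bookkeeping is done, the Galois-theoretic step reduces to a direct application of Proposition~\ref{minimal} and Lemma~\ref{galois}.
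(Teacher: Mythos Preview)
Your proposal is correct and follows essentially the same route as the paper's proof: both enumerate the strong Shoda pairs via the metacyclic description into the two families $(G,K)$ with $\GEN{a}\subseteq K$ and $(\GEN{a},\GEN{a^l})$ with $l\mid m$, apply Proposition~\ref{minimal}(4), and then collapse the resulting chain of conditions $\Q(\zeta_l)^{\GEN{b}}\cap F\subseteq\Q(\zeta_m)^{\GEN{b}}\cap F$ to the maximal ones. Your write-up is in fact somewhat more explicit than the paper's (e.g.\ the derivation of $o_m(r)=p$, the normality of subgroups of $\GEN{a}$ in $G$, and the reason the inclusion $\Q(\zeta_e)^{\GEN{b}}\subseteq\Q(\zeta_m)^{\GEN{b}}$ holds), but the underlying argument is the same.
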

\begin{proof}
 The strong Shoda pairs of $G$ in a complete set of representatives are either of the form $(G,K)$ or $(\GEN{a},\GEN{a^l})$ for $l\mid m$. Here $a^{r-1}\in K$ and hence $a\in K$ and $K=G$ or $K=\GEN{a}$. 
Both $(G,\GEN{a})$ and $(\GEN{a},1)$ are always strong Shoda pairs of $G$ and all $[G:K]$ divide $[G:\GEN{a}]=p$ and all $[\GEN{a}:\GEN{a^l}]$ divide $[\GEN{a}:1]=m$.

By Proposition~\ref{minimal}, the number of simple components of $FG$ is minimal if and only if $[\Q(\zeta_{[G:K]})\cap F:\Q]=1=[\Q(\zeta_{[\GEN{a}:\GEN{a^l}]})^{\GEN{b}}\cap F:\Q]$. Since $$\Q\subseteq \Q(\zeta_p)\cap F$$ and $$\Q\subseteq \Q(\zeta_l)^{\GEN{b}}\cap F \subseteq \ldots \subseteq \Q(\zeta_m)^{\GEN{b}}\cap F,$$ the number of simple components of $FG$ is minimal if and only if $\Q(\zeta_p)\cap F=\Q$ (or equivalently $|I_p(F)|=\phi(p)=p-1$ by Lemma~\ref{galois}) and $\Q(\zeta_m)^{\GEN{b}}\cap F=\Q$.
 \end{proof}

\begin{corollary}\label{qn}
 Let $F$ be a number field and $G=\GEN{a}_q\rtimes_k \GEN{b}_n$ with $q$ a prime which does not divide the integer $n$. Assume that $b^{-1}ab=a^r$ with $o_q(r)=\frac{n}{k}$. Then the number of simple components of $FG$ is minimal if and only if $\Q(\zeta_n)\cap F=\Q$ (or equivalently $|I_n(F)|=\phi(n)$) and $\Q(\zeta_{qk})^{\GEN{b}}\cap F=\Q$. Here $\GEN{b}$ acts on $\Q(\zeta_{qk})$ by the rules $b\cdot \zeta_{q}=\zeta_q^r$ and $b\cdot \zeta_{k}=\zeta_k$ and we denote by $\Q(\zeta_q)^{\GEN{b}}$ the fixed subfield of $\Q(\zeta_q)$ under the action of $\GEN{b}$.
\end{corollary}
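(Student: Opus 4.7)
The plan is to enumerate the strong Shoda pairs of $G$ (which is strongly monomial, being metacyclic hence abelian-by-supersolvable) up to equivalence, via the metacyclic description of primitive central idempotents recalled just before the corollary, and then apply Proposition~\ref{minimal}(4) to each pair and verify that the resulting collection of conditions collapses to the two field-theoretic equalities claimed.

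First, I would use the hypotheses that $q$ is prime and $o_q(r)=n/k$ to show that, for a proper divisor $x$ of $n/k$, the element $a^{r^x-1}$ has order $q$ (so $a^{r^x-1}\in K$ forces $\GEN{a}\leq K$), while $a^{r^{n/k}-1}=1$. Hence the minimality condition on $d$ forces either $d=1$ with $\GEN{a}\leq K$, or $d=n/k$ with $K\cap\GEN{a}=1$. For $d=1$: $H=G$, and since $G/\GEN{a}\cong C_n$ and $G/K$ is cyclic, $K=\GEN{a,b^l}$ for some $l\mid n$, so $[H:K]=l$ and $N_G(K)/H=1$. For $d=n/k$: $H=\GEN{a,b^{n/k}}$; because $r^{n/k}\equiv 1\pmod q$, the element $b^{n/k}$ is central in $G$, so $H\cong C_q\times C_k$ (using $\gcd(q,n)=1$, which forces $b^n=1$ when $r\not\equiv 1\pmod q$), and the subgroups $K$ with $K\cap\GEN{a}=1$ are precisely $\GEN{b^{nj/k}}$ for $j\mid k$, giving $H/K\cong C_{qj}$. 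Since such $K$ is central in $G$, $N_G(K)=G$ and $N_G(K)/H\cong \GEN{b}/\GEN{b^{n/k}}$, acting on $H/K$ by $\zeta_q\mapsto\zeta_q^r$ and trivially on $\zeta_j$.

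Next, I would plug these into Proposition~\ref{minimal}(4). The $d=1$ family contributes the conditions $\Q(\zeta_l)\cap F=\Q$ for every $l\mid n$, which by Lemma~\ref{galois2} and Lemma~\ref{galois} are jointly equivalent to $\Q(\zeta_n)\cap F=\Q$, i.e.\ $|I_n(F)|=\phi(n)$. The $d=n/k$ family contributes $\Q(\zeta_{qj})^{\GEN{b}}\cap F=\Q$ for every $j\mid k$; since the $\GEN{b}$-action on $\Q(\zeta_{qk})$ restricts to the action on $\Q(\zeta_{qj})$, the tower $\Q(\zeta_{qj})\subseteq\Q(\zeta_{qk})$ yields $\Q(\zeta_{qj})^{\GEN{b}}=\Q(\zeta_{qj})\cap \Q(\zeta_{qk})^{\GEN{b}}\subseteq \Q(\zeta_{qk})^{\GEN{b}}$, so the whole family collapses to the single condition $\Q(\zeta_{qk})^{\GEN{b}}\cap F=\Q$.

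The main obstacle is the combinatorial identification of the second family of strong Shoda pairs together with the precise action of $N_G(K)/H$ on $H/K$: one has to distinguish the $\zeta_q$-factor, on which $b$ acts nontrivially via the exponent $r$, from the $\zeta_j$-factor, on which $b$ acts trivially because it commutes with $b^{n/k}$. A sanity check is the edge case $n=k$: then $r\equiv 1\pmod q$, $G$ is abelian, the second family collapses into the first, and the two claimed conditions merge to $\Q(\zeta_{qn})\cap F=\Q$, matching Corollary~\ref{abelian} applied to $G$, whose exponent is $qn$.
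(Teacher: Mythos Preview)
Your proposal is correct and follows essentially the same approach as the paper: both enumerate the strong Shoda pairs via the metacyclic recipe, eliminate the intermediate values of $d$ by the same order argument on $a^{r^x-1}$, obtain the two families $(G,\GEN{a,b^l})$ and $(\GEN{a,b^{n/k}},\GEN{b^{hn/k}})$, and then apply Proposition~\ref{minimal}(4) and the resulting chains of subfields to collapse the conditions to $\Q(\zeta_n)\cap F=\Q$ and $\Q(\zeta_{qk})^{\GEN{b}}\cap F=\Q$. Your write-up is in fact a bit more explicit than the paper on two points (the reason $b^n=1$, and the splitting of the $\GEN{b}$-action into the $\zeta_q$- and $\zeta_j$-parts), and your sanity check for $n=k$ against Corollary~\ref{abelian} is a nice addition; neither changes the underlying strategy.
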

\begin{proof}
Consider the groups $G_d=\GEN{a,b^d}$ for $d$ a divisor of $\frac{n}{k}$. Assume that $d\neq 1$ and $d\neq \frac{n}{k}$ and $(G_d,K)$ is a strong Shoda pair of $G$. Then by the conditions, $a^{r-1}\notin K$ and thus $\GEN{a}\nsubseteq K$. However also $1\neq a^{r^d-1}\in K$ and therefore $a\in K$, a contradiction.

Therefore, the strong Shoda pairs of $G$ in a complete set of representatives are either of the form $(G,\GEN{a,b^l})$ with $l\mid n$ or $(\GEN{a,b^{\frac{n}{k}}},\GEN{b^{h\frac{n}{k}}})$ for $h\mid k$. 
Both $(G,\GEN{a})$ and $(\GEN{a,b^{\frac{n}{k}}},1)$ are always strong Shoda pairs of $G$ and all $[G:\GEN{a,b^l}]$ divide $[G:\GEN{a}]=n$ and all $[\GEN{a,b^{\frac{n}{k}}}:\GEN{b^{h\frac{n}{k}}}]$ divide $[\GEN{a,b^{\frac{n}{k}}}:1]=qk$.

Note that $Z(G)=\GEN{b^{n/k}}$. Therefore $N_G(\GEN{b^{h\frac{n}{k}}})=G$ and $G/\GEN{a,b^{n/k}}$ acts the same on $\GEN{a}$ as $\GEN{b}$ does.

By Proposition~\ref{minimal}, the number of simple components of $FG$ is minimal if and only if $[\Q(\zeta_{l})\cap F:\Q]=1=[\Q(\zeta_{qh})\cap F:\Q]$. Since $$\Q\subseteq \Q(\zeta_l)\cap F\subseteq \ldots \subseteq \Q(\zeta_n)\cap F$$ and $$\Q\subseteq \Q(\zeta_{qh})^{\GEN{b}}\cap F \subseteq \ldots \subseteq \Q(\zeta_{qk})^{\GEN{b}}\cap F,$$ the number of simple components of $FG$ is minimal if and only if $\Q(\zeta_n)\cap F=\Q$ (or equivalently $|I_n(F)|=\phi(n)$ by Lemma~\ref{galois}) and $\Q(\zeta_{qk})^{\GEN{b}}\cap F=\Q$.
 \end{proof}

\begin{corollary}\label{faithfulmetacyclic}
 Let $F$ be a number field and $G=\GEN{a}_{q^m}\rtimes_1 \GEN{b}_{p^n}$ with $p$ and $q$ different primes and $b^{-1}ab=a^r$ with $o_{q^m}(r)=p^n$. The number of simple components of $F G$ is minimal if and only if $\Q(\zeta_{p^n})\cap F=\Q$ (or equivalently $|I_{p^n}(F)|=p^{n-1}(p-1)$) and $\Q(\zeta_{q^m})^{\GEN{b}}\cap F=\Q$. Here $\GEN{b}$ acts on $\Q(\zeta_{q^m})$ by the rule $b\cdot \zeta_{q^m}=\zeta_{q^m}^r$ and we denote by $\Q(\zeta_{q^m})^{\GEN{b}}$ the fixed subfield of $\Q(\zeta_{q^m})$ under the action of $\GEN{b}$.
\end{corollary}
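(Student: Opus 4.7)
The plan is to follow the strategy of Corollary~\ref{qn}: identify a complete set of representatives of strong Shoda pairs of $G$ and apply Proposition~\ref{minimal} to each.

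First I would use the metacyclic description preceding Corollary~\ref{qn}: the strong Shoda pairs of $G$ are the pairs $(G_d, K)$ with $G_d = \GEN{a, b^d}$, $d$ a divisor of $p^n$, $K \leq G_d$ satisfying $d = \min\{x \mid p^n : a^{r^x-1} \in K\}$ and $G_d/K$ cyclic. The key number-theoretic observation is that $p \neq q$ together with $o_{q^m}(r) = p^n$ forces $r \pmod q$ to have order $p^n$ in $(\Z/q\Z)^*$: the kernel of $(\Z/q^m\Z)^* \to (\Z/q\Z)^*$ has order $q^{m-1}$, coprime to $p^n$, so the $p$-part of the order is preserved. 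Consequently $r^{p^i} \not\equiv 1 \pmod q$ for $0 \leq i < n$, hence $\gcd(r^{p^i}-1, q^m) = 1$, and therefore $a^{r^{p^i}-1}$ generates $\GEN{a}$. The same argument as in Corollary~\ref{qn} then rules out any strong Shoda pair with $d = p^i$ for $1 \leq i \leq n-1$: such a pair would require $a^{r^{p^i}-1} \in K$ (forcing $\GEN{a} \subseteq K$) yet $a^{r-1} \notin K$ (by minimality of $d$, forcing $\GEN{a} \not\subseteq K$), a contradiction.

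The surviving strong Shoda pairs then fall into two families. From $d = 1$ (where $a^{r-1} \in K$ forces $K \supseteq \GEN{a} = G'$ and cyclicity of $G/K$ forces $K = \GEN{a, b^l}$) we get the pairs $(G, \GEN{a, b^l})$ for $l \mid p^n$, with $[G : \GEN{a,b^l}] = p^n/l$. From $d = p^n$ (where $G_d = \GEN{a}$ and the minimality condition becomes $K \subsetneq \GEN{a}$) we get the pairs $(\GEN{a}, \GEN{a^{q^j}})$ for $1 \leq j \leq m$, with $[\GEN{a}:\GEN{a^{q^j}}] = q^j$. In both families $N_G(K) = G$: in the first because $K \supseteq G'$ implies $K \unlhd G$, in the second because $\GEN{a^{q^j}}$ is characteristic in the normal subgroup $\GEN{a}$.

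Applying Proposition~\ref{minimal}: for a Type 1 pair $[H:K] = p^n/l$ and $N_G(K)/H$ is trivial, so the condition is $\Q(\zeta_{p^n/l}) \cap F = \Q$; for a Type 2 pair $[H:K] = q^j$ and $N_G(K)/H = G/\GEN{a} \simeq \GEN{b}$, acting on $\Q(\zeta_{q^j})$ via $\zeta_{q^j} \mapsto \zeta_{q^j}^r$ (induced from $b^{-1}ab = a^r$ on the cyclic quotient $\GEN{a}/\GEN{a^{q^j}}$), so the condition is $\Q(\zeta_{q^j})^{\GEN{b}} \cap F = \Q$. Via the ascending towers $\Q \subseteq \Q(\zeta_{p^n/l}) \cap F \subseteq \Q(\zeta_{p^n}) \cap F$ and $\Q \subseteq \Q(\zeta_{q^j})^{\GEN{b}} \cap F \subseteq \Q(\zeta_{q^m})^{\GEN{b}} \cap F$, the conjunction of all these conditions collapses to the single pair $\Q(\zeta_{p^n}) \cap F = \Q$ and $\Q(\zeta_{q^m})^{\GEN{b}} \cap F = \Q$. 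Finally, Lemma~\ref{galois} recasts the first as $|I_{p^n}(F)| = \phi(p^n) = p^{n-1}(p-1)$. The main obstacle I anticipate is the Type 2 family, which is now nontrivial: unlike in Corollary~\ref{qn}, where $|a| = q$ leaves only one pair of this form, here $|a| = q^m$ produces $m$ such pairs, and one must argue via the cyclotomic fixed-field tower that the case $j = m$ subsumes the rest.
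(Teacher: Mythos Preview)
Your proof is correct and follows essentially the same approach as the paper's: both identify the same two families of strong Shoda pairs $(G,\GEN{a,b^{p^i}})$ and $(\GEN{a},\GEN{a^{q^j}})$ via the number-theoretic fact that $r$ has order $p^n$ modulo $q$ (you phrase this via the kernel of $(\Z/q^m\Z)^*\to(\Z/q\Z)^*$, the paper via the restriction $\Aut(\GEN{a})\to\Aut(\GEN{a^{q^{m-1}}})$, which amounts to the same thing), then apply Proposition~\ref{minimal} and collapse the resulting tower of conditions. Your justification that $N_G(K)=G$ in both families is a welcome addition that the paper leaves implicit.
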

\begin{proof}
Let $\sigma$ be the automorphism of $\GEN{a}$ given by $\sigma(a)=a^r$. Then $\sigma$ has order $p^n$. As the kernel of the restriction map $\Aut(\GEN{a})\rightarrow \Aut\left(\GEN{a^{q^{m-1}}}\right)$ has order $q^{m-1}$ it intersects $\GEN{\sigma}$ trivially and therefore the restriction of $\sigma$ to $\GEN{a^{q^{m-1}}}$ also has order $p^n$. This implies that $q\equiv 1 \mod p^n$ and thus $q$ is odd. Therefore, $\Aut\left(\GEN{a^{q^j}}\right)(=\Gal(\Q(\zeta_{q^j})/\Q)=\U(\Z/q^j\Z))$ is cyclic for every $j=0,1,\dots,m$ and $\GEN{\sigma}$ is the unique subgroup of $\Aut(\GEN{a})$ of order $p^n$. So, for every $i=1,\dots,m$, the image of $r$ in $\Z/q^i \Z$ generates the unique subgroup of $\U(\Z/q^i \Z)$ of order $p^n$. In particular $r^{p^n}\equiv 1 \mod q^m$ and $r^{p^j}\not\equiv 1 \mod q$ for every $j=0,\ldots,n-1$. Therefore, $r\not\equiv 1 \mod q$ and hence $\GEN{a^{r-1}}=\GEN{a}$. Using the description of strong Shoda pairs of $G$ and using the arguments as in the proof of Corollary~\ref{qn}, the 
strong Shoda pairs of $G$ in a complete set of representatives are of two types:
\begin{enumerate}
\item $\left(G,\GEN{a,b^{p^i}}\right), \; i=0,\dots,n$,\\
\item $\left(\GEN{a},\GEN{a^{q^j}}\right), \; j=1,\dots,m$.
\end{enumerate}

Now, the number of simple components of $F G$ is minimal if and only if $[\Q(\zeta_{p^i})\cap F:\Q]=1=[\Q(\zeta_{q^j})^{\GEN{b}}\cap F:\Q]$ for all $i=0,\dots,n$ and $j=1,\dots,m$. Since $$\Q\subseteq \Q(\zeta_p)\cap F\subseteq \Q(\zeta_{p^2})\cap F\subseteq \ldots \subseteq \Q(\zeta_{p^n})\cap F$$ and 
$$\Q \subseteq \Q(\zeta_{q})^{\GEN{b}}\cap F\subseteq \Q(\zeta_{q^2})^{\GEN{b}}\cap F\subseteq \ldots\subseteq \Q(\zeta_{q^m})^{\GEN{b}}\cap F,$$ the above is equivalent with $\Q(\zeta_{p^n})\cap F=\Q$ and $\Q(\zeta_{q^m})^{\GEN{b}}\cap F=\Q$.
 \end{proof}

The conditions in the previous corollary are clearly fulfilled when $\Q(\zeta_{p^n})\cap F=\Q$ and $\Q(\zeta_{q^m})\cap F=\Q$ (or equivalently $|I_{p^n}(F)|=p^{n-1}(p-1)$ and $|I_{q^m}(F)|=q^{m-1}(q-1)$). However the second condition is not needed as we showed already in Example~\ref{D6}.
We give another example, when $\zeta_{q^m}$ might be in $F$.

\begin{example}
The conditions of Corollary~\ref{faithfulmetacyclic} hold for $F=\Q(\zeta_3)$ and $G=\GEN{a}_3\rtimes \GEN{b}_4$ with $b^{-1}ab=a^2$. Clearly $\Q(\zeta_{4})\cap F=\Q$. Since $\zeta_2+\zeta_3^2=-1$, also $\Q(\zeta_{3})^{\GEN{2}}\cap F=\Q$. 
\end{example}

\begin{remark}
There is a strong correspondence between the simple components in semisimple finite group algebras and simple components in group algebras over number fields. Let $\F_{q}$ be the finite field of order $q=p^n$ then $\F_q(\zeta_k)=\F_{q^{o_{k}(q)}}$ for an integer $k$ coprime to $p$. The Galois group $\Gal(\F_q(\zeta_k)/\F_q)$ is cyclic of order $o_k(p)$ and can be seen as a subgroup of $\U(\Z/k\Z)$. With the notations in this paper, for a cyclic group $G$, $\mathcal{C}_{\F_q}(G)$ agrees with the $q$-cyclotomic classes of $G$, denoted by $\mathcal{C}(G)$ in \cite{Broche2007}. 
Assume now that $G$ is a finite group of order coprime to $p$. 
Then $E_{\F_q}(G,H/K)$ agrees with the notion denoted by $E_G(H/K)$ in \cite{Broche2007} for a strong Shoda pair $(H,K)$ of $G$. In this way, the primitive central idempotents for strongly monomial groups are determined by strong Shoda pairs $(H,K)$ and $C\in \mathcal{C}_{\F_q}(H/K)$ \cite[Theorem 7]{Broche2007}. There is an analogue computation for how many primitive central idempotents are build by a fixed strong Shoda pair \cite[Lemma 6]{Broche2007}. In this way, we show that the number of simple components of $\F_q G$ is minimal (i.e. is the same as the number of components of $\Q G$) if and only if $$\frac{\phi([H:K])}{o_{[H:K]}(q)} \frac{|E_{\F_q}(G,H/K)|}{|N_G(K)|}=1$$ for all strong Shoda pairs $(H,K)$ in a complete set of representatives. Let $k=[H:K]$, one can compute that 
\begin{eqnarray*}
 \frac{\phi(k)}{o_{k}(q)} \frac{|E_{\F_q}(G,H/K)|}{|N_G(K)|}&=&[\F_{p^{\phi(k)}}:\F_p(\zeta_{k})][\F_p(\zeta_k)^{N_G(K)/H} \cap \F_q:\F_p]\\
&=& [\F_{p^{\phi(k)}}:\F_{p^{o_k(p)}}][\F_{p^{o_k(p)/[N_G(K):H]}} \cap \F_q:\F_p].
\end{eqnarray*}
This means that $\frac{\phi(k)}{o_{k}(q)} \frac{|E_{\F_q}(G,H/K)|}{|N_G(K)|}=1$ if and only if both $ [\F_{p^{\phi(k)}}:\F_{p^{o_k(p)}}]=1$ and $[\F_{p^{o_k(p)/[N_G(K):H]}} \cap \F_q:\F_p]=1$. Note that the former condition means that $o_k(p)=\phi(k)$.
Also an analogue of Corollary~\ref{sufficient} for the finite case follows, i.e if $\phi([H:K])=o_{[H:K]}(q)$ for each $(H,K)$ in a complete set of representatives of strong Shoda pairs of $G$, then the number of simple components of $\F_q G$ is minimal. As a consequence, we immediately see that if the number of simple components of $\F_qG$ is minimal, necessarily also the number of simple components of $\F_p G$ is minimal.

As an analogue of Corollary~\ref{abelian}, we find \cite[Theorem 2.2]{Ferraz2007} which says that the number of simple components of $\F_qG$ is minimal if and only if $\phi(e)=o_e(q)$ where $e$ is the exponent of $G$.

Similarly, one can obtain necessary and sufficient conditions on other groups for the minimality of the number of simple components over a fixed finite field $\F_q$. However, note that the condition $\phi([H:K])=o_{[H:K]}(p)$ for each $(H,K)$ in a complete set of representatives of strong Shoda pairs of $G$, already strongly restricts the possibilities for $G$ and its strong Shoda pairs. Indeed, it is well know that $\phi(k)=o_{k}(p)$ is only possible when $\U(\Z/k\Z)$ is cyclic. This happens only when $k=2,4,s^n$ or $2s^n$ for an odd prime $s$ and a positive integer $n$.
\end{remark}

\section{The rank of $\mathcal{Z}(\U(R G))$ for $R$ the ring of integers of $F$}

Let $F$ be a number field and $R$ its ring of integers. Let $G$ be a finite group. It is a classical problem to study the group of units of $RG$ and in particular to describe a finite set of elements that generate the unit group up to finite index. In this study the central units of $RG$,  $\mathcal{Z}(\U(R G))$ show up naturally \cite{JesParSeh1996,GJ,JesPar2011,2012JdROVG,2013JdROVG}. 
We give a formula for the rank of $\mathcal{Z}(\U(R G))$. Using Dirichlet's Unit Theorem one can prove that the rank of $\mathcal{Z}(\U(R G))$ is the difference between the number of simple components of $\R\otimes_{\Q} FG$ and the number of simple components of $F G$, see \cite[Theorem 3.5]{Ferraz2004}.

We first recall the following theorem.
\begin{theorem}\cite[Theorem 3.1]{2013JdROVG}\label{rank}
 Let $G$ be a finite strongly monomial group. Let $X$ be a complete set of representatives of strong Shoda pairs of $G$. Then the rank of $\mathcal{Z}(\U(\Z G))$ equals  $$\sum_{(H,K)\in X} \left(\frac{\phi([H:K])}{k_{(H,K)}[N:H]}-1\right),$$ where $h$ is such that $H=\GEN{h,K}$ and $$k_{(H,K)}=\left\{\begin{array}{ll} 1, & \mbox{if } hh^n\in K \mbox{ for some } n\in N_G(K); \\ 2, & \mbox{otherwise}.\end{array} \right.$$
\end{theorem}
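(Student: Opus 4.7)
The plan is to reduce the problem to a Galois-theoretic count via Ferraz's formula \cite[Theorem~3.5]{Ferraz2004}, which gives the rank of $\mathcal{Z}(\U(\Z G))$ as the difference between the number of simple components of $\R G$ and the number of simple components of $\Q G$. Since $G$ is strongly monomial, the corollary to Theorem~\ref{main} shows that $\{e(G,H,K) : (H,K) \in X\}$ is a complete set of primitive central idempotents of $\Q G$, so the latter count is $|X|$. The task reduces to computing, for each $(H,K) \in X$, the number of simple components that $e(G,H,K)$ produces after extending scalars to $\R$.

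Fix $(H,K) \in X$, write $k = [H:K]$ and $N = N_G(K)$. Since $E_{\Q}(G,H/K) = N$, Theorem~\ref{main} gives
$$\Q Ge(G,H,K) \simeq M_{[G:N]}\bigl(\Q(\zeta_k) *_{\tau}^{\sigma} N/H\bigr),$$
where, because $H/K$ is maximal abelian in $N/K$, the action $\sigma$ of $N/H$ on $\Q(\zeta_k)$ is faithful. Consequently the center of this simple $\Q$-algebra is $Z_{(H,K)} := \Q(\zeta_k)^{N/H}$, a subfield of $\Q(\zeta_k)$ of degree $\phi(k)/[N:H]$ over $\Q$. The number of simple components of $\R \otimes_{\Q} \Q Ge(G,H,K)$ equals the number of simple components of $\R \otimes_{\Q} Z_{(H,K)}$, which equals $r_1 + r_2$, where $r_1$ counts real embeddings of $Z_{(H,K)}$ and $r_2$ counts pairs of complex embeddings.

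Since $Z_{(H,K)} \subseteq \Q(\zeta_k)$ is Galois over $\Q$, it is either totally real or totally complex. It is totally real iff complex conjugation, corresponding to $-1 \in \U(\Z/k\Z) \simeq \Gal(\Q(\zeta_k)/\Q)$, fixes $Z_{(H,K)}$ pointwise, iff $-1$ lies in the image $S$ of $N/H$ under the faithful embedding $\sigma : N/H \hookrightarrow \U(\Z/k\Z)$ from Theorem~\ref{main}. Unpacking this identification with $H = \GEN{h,K}$, one obtains $-1 \in S$ iff there exists $n \in N$ with $h^n K = h^{-1} K$, equivalently $hh^n \in K$, which is precisely the condition defining $k_{(H,K)} = 1$. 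In the totally real case $r_1 + r_2 = [Z_{(H,K)}:\Q] = \phi(k)/[N:H]$, while in the totally complex case $r_1 + r_2 = [Z_{(H,K)}:\Q]/2 = \phi(k)/(2[N:H])$; both collapse to the uniform expression $\phi(k)/(k_{(H,K)}[N:H])$. Summing over $X$ and subtracting $|X|$ yields the claimed formula.

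The main obstacle is the careful translation between the intrinsic group-theoretic condition ``$hh^n \in K$ for some $n \in N$'' and the Galois-theoretic condition ``$-1$ lies in the image of the crossed-product action $\sigma$''. This hinges on chasing the definition of $\sigma$ in Theorem~\ref{main} through the section $\phi : N/H \to N/K$ and exploiting that $H/K$ is cyclic (hence abelian, so $h^{\phi(nH)}K = h^n K$), and on verifying that $\sigma$ is faithful so that the center of the crossed product is the full fixed field $\Q(\zeta_k)^{N/H}$ and not a proper subfield.
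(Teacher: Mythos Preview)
The paper does not prove this theorem; it is merely recalled, with citation, from \cite{2013JdROVG} and then used as a black box in the subsequent proposition. So there is no proof in the present paper to compare against.

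That said, your argument is correct and is the natural one. A couple of small points worth making explicit. First, your use of Theorem~\ref{main} with $F=\Q$ relies on the fact that $\mathcal{C}_{\Q}(H/K)$ is a singleton (the full Galois group $\U(\Z/k\Z)$ acts transitively on the faithful characters), so that $e(G,H,K)=e_C(G,H,K)$ for the unique $C$ and Theorem~\ref{main} indeed describes $\Q Ge(G,H,K)$; this is implicit in your sentence ``Since $E_{\Q}(G,H/K)=N$'' but deserves a word. Second, the claim that $Z_{(H,K)}=\Q(\zeta_k)^{N/H}$ is Galois over $\Q$ holds because $\Gal(\Q(\zeta_k)/\Q)$ is abelian, so every subgroup is normal; you use this when invoking the totally real/totally imaginary dichotomy. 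Third, the conjugation computation $h^{\phi(nH)}K=h^nK$ is valid because any two lifts of $nH$ to $N/K$ differ by an element of $H/K$, which is abelian and hence acts trivially on itself by conjugation. With these points noted, the identification of $k_{(H,K)}$ with the presence or absence of $-1$ in the image of $N/H\hookrightarrow\U(\Z/k\Z)$, and the resulting count $\phi(k)/(k_{(H,K)}[N:H])$ of simple components of $\R\otimes_{\Q}\Q Ge(G,H,K)$, are correct.
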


Let us denote by $r_L(G)$ the number of simple components of $L G$ for a field $L$ and a finite group $G$.

\begin{lemma}\label{tensor}
 Let $F$ be a number field and $G$ a finite group. Then the number of simple components of $\R\otimes_{\Q} FG$ equals $r r_{\R}(G)+sr_{\C}(G)$, where $r$ is the number of real embeddings of $F$ and $2s$ is the number of complex embeddings of $F$.
\end{lemma}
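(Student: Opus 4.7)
The plan is to decompose $\R\otimes_\Q FG$ as a direct product of group algebras of the form $\R G$ and $\C G$ by first decomposing the scalar extension $\R\otimes_\Q F$.

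The first step is to use the standard fact from algebraic number theory that $F\otimes_\Q \R\simeq \R^r\times \C^s$ as $\R$-algebras, where the $r$ real factors are indexed by the real embeddings of $F$ and the $s$ complex factors are indexed by pairs of complex conjugate embeddings. This follows by writing $F=\Q(\alpha)$ and factoring the minimal polynomial of $\alpha$ over $\R$.

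Next, I would exploit the identification $FG\simeq F\otimes_\Q \Q G$ and the associativity of the tensor product to write
$$\R\otimes_\Q FG \simeq \R\otimes_\Q(F\otimes_\Q \Q G)\simeq (\R\otimes_\Q F)\otimes_F FG\simeq (\R^r\times \C^s)\otimes_F FG.$$
Distributing the tensor product over the direct product factors, each real factor contributes $\R\otimes_{F,\sigma}FG\simeq \R G$ (where $\sigma$ is the corresponding real embedding used to make $\R$ into an $F$-algebra), and each complex factor contributes $\C\otimes_{F,\tau}FG\simeq \C G$. Hence
$$\R\otimes_\Q FG\simeq (\R G)^r\times (\C G)^s$$
as $\R$-algebras.

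Finally, since the number of simple components of a finite direct product of semisimple algebras is the sum of the numbers of simple components of the factors, the number of simple components of $\R\otimes_\Q FG$ is $r\cdot r_\R(G)+s\cdot r_\C(G)$, as claimed. The only mildly delicate point is keeping track of which embedding is used to view $\R$ (respectively $\C$) as an $F$-algebra in each factor, but since the isomorphism type of $\R\otimes_{F,\sigma}FG$ as an $\R$-algebra is simply $\R G$ regardless of the embedding $\sigma$ (and similarly for the complex case), the counting argument is clean.
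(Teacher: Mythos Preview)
Your proof is correct and follows essentially the same route as the paper: both arguments reduce to the decomposition $F\otimes_{\Q}\R\simeq \R^r\times\C^s$ and then distribute the tensor product to obtain $(\R G)^r\times(\C G)^s$. The only cosmetic difference is that the paper tensors $(\R^r\oplus\C^s)$ with $\R G$ over $\R$, whereas you tensor it with $FG$ over $F$ and then observe that the choice of embedding does not affect the $\R$-algebra isomorphism type; both manipulations are equivalent rearrangements of the same associativity argument.
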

\begin{proof}
 This follows from the equalities:
\begin{eqnarray*}
 \R\otimes_{\Q} FG &=& \R\otimes_{\Q} F \otimes_{\Q} \Q G\\
&=& F\otimes_{\Q} \R G\\
&=& F\otimes_{\Q} \R \otimes_{\R} \R G\\
&=& (\R^r\oplus \C^s)\otimes_{\R} \R G\\
&=& (\R G)^r \oplus (\C G)^s.
\end{eqnarray*}
 \end{proof}

\begin{proposition}
 Let $G$ be a finite strongly monomial group and $F$ a number field with ring of integers $R$. Let $X$ be a complete set of representatives of strong Shoda pairs of $G$. Then the rank of $\mathcal{Z}(\U(R G))$ equals $(r-1)r_{\R}(G)+sr_{\C}(G)+ \sum_{(H,K)\in X}\frac{\phi([H:K])}{|N_G(K)|}\left(\frac{|H|}{k_{(H,K)}}-\frac{|E_F(G,H/K)|}{|I_{[H:K]}(F)|}\right)$, where $r$ is the number of real embeddings of $F$ and $2s$ is the number of complex embeddings of $F$.
\end{proposition}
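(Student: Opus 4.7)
The plan is to combine four ingredients that are all either stated or implicit in the paper: the Ferraz formula for the rank (\cite[Theorem 3.5]{Ferraz2004}) expressing it as a difference of simple component counts, Lemma~\ref{tensor} for counting components of $\R\otimes_{\Q}FG$, Proposition~\ref{number} for counting components of $FG$, and Theorem~\ref{rank} for the integral case over $\Z$.

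First I would apply the Ferraz formula to get
\[
\textnormal{rank}(\mathcal{Z}(\U(RG)))=\#\{\text{components of }\R\otimes_{\Q}FG\}-\#\{\text{components of }FG\}.
\]
By Lemma~\ref{tensor} the first term equals $r\,r_{\R}(G)+s\,r_{\C}(G)$, and by Proposition~\ref{number} the second term equals
\[
\sum_{(H,K)\in X}\frac{\phi([H:K])}{|I_{[H:K]}(F)|}\cdot\frac{|E_F(G,H/K)|}{|N_G(K)|}.
\]
So it only remains to convert one copy of $r_{\R}(G)$ into a sum over strong Shoda pairs, and this is exactly where Theorem~\ref{rank} enters.

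Next I would specialize Theorem~\ref{rank} to the case $F=\Q$, $R=\Z$, where $r=1$ and $s=0$. The Ferraz formula then gives
\[
\textnormal{rank}(\mathcal{Z}(\U(\Z G)))=r_{\R}(G)-r_{\Q}(G).
\]
Since $G$ is strongly monomial and $X$ is a complete set of representatives, $r_{\Q}(G)=|X|$. Combining with Theorem~\ref{rank} yields
\[
r_{\R}(G)=|X|+\sum_{(H,K)\in X}\left(\frac{\phi([H:K])}{k_{(H,K)}[N_G(K):H]}-1\right)=\sum_{(H,K)\in X}\frac{\phi([H:K])}{k_{(H,K)}[N_G(K):H]}.
\]
Rewriting $[N_G(K):H]=|N_G(K)|/|H|$, this becomes $\sum_{(H,K)\in X}\tfrac{\phi([H:K])\,|H|}{k_{(H,K)}\,|N_G(K)|}$, which is precisely the $|H|/k_{(H,K)}$ contribution in the target formula.

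Finally, I would write $r\,r_{\R}(G)=(r-1)r_{\R}(G)+r_{\R}(G)$, substitute the expression for $r_{\R}(G)$ just derived into the single leftover copy, and combine with the Proposition~\ref{number} sum with a common denominator $|N_G(K)|$. The two strong-Shoda-pair sums then merge into
\[
\sum_{(H,K)\in X}\frac{\phi([H:K])}{|N_G(K)|}\left(\frac{|H|}{k_{(H,K)}}-\frac{|E_F(G,H/K)|}{|I_{[H:K]}(F)|}\right),
\]
yielding the stated formula. The proof is essentially a bookkeeping exercise; the only non-mechanical step is recognizing that the integral-case Theorem~\ref{rank} together with $r_{\Q}(G)=|X|$ is what supplies the closed-form expression for $r_{\R}(G)$, and this is the main (though minor) obstacle since nothing else in the excerpt directly gives $r_{\R}(G)$ in terms of strong Shoda pairs.
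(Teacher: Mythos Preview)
Your proposal is correct and follows essentially the same route as the paper: both combine the Ferraz rank formula, Lemma~\ref{tensor}, Proposition~\ref{number}, and Theorem~\ref{rank}, splitting off one copy of $r_{\R}(G)$ and using $r_{\R}(G)-r_{\Q}(G)=\textnormal{rank}(\mathcal{Z}(\U(\Z G)))$ together with $r_{\Q}(G)=|X|$ to rewrite that copy as the strong-Shoda-pair sum. The paper keeps $r_{\R}(G)-r_{\Q}(G)$ together and cancels the $-1$'s directly, whereas you compute $r_{\R}(G)$ explicitly first, but the algebra is the same.
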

\begin{proof}
 From Lemma~\ref{tensor} it follows that the rank of $\mathcal{Z}(\U(R G))$ equals $r r_{\R}(G)+sr_{\C}(G)-r_F(G)$. 
From Proposition~\ref{number}, $$r_F(G)=r_{\Q}(G)+\sum_{(H,K)\in X}\left(\frac{\phi([H:K])}{|I_{[H:K]}(F)|}\frac{|E_F(G,H/K)|}{|N_G(K)|}-1\right).$$ 
Therefore, applying Theorem~\ref{rank},
\begin{eqnarray*}
& & r r_{\R}(G)+sr_{\C}(G)-r_F(G) \\
&&= (r-1)r_{\R}(G)+sr_{\C}(G)+ r_{\R}(G) - r_{\Q}(G) -\sum_{(H,K)\in X}\left(\frac{\phi([H:K])}{|I_{[H:K]}(F)|}\frac{|E_F(G,H/K)|}{|N_G(K)|}-1\right)\\
&&= (r-1)r_{\R}(G)+sr_{\C}(G)+ \sum_{(H,K)\in X} \left(\left(\frac{\phi([H:K])}{k_{(H,K)}[N:H]}-1\right)-\left(\frac{\phi([H:K])}{|I_{[H:K]}(F)|}\frac{|E_F(G,H/K)|}{|N_G(K)|}-1\right)\right)\\
&&= (r-1)r_{\R}(G)+sr_{\C}(G)+ \sum_{(H,K)\in X}\frac{\phi([H:K])}{|N_G(K)|}\left(\frac{|H|}{k_{(H,K)}}-\frac{|E_F(G,H/K)|}{|I_{[H:K]}(F)|}\right).
\end{eqnarray*}
 \end{proof}

\renewcommand{\bibname}{References}
\bibliographystyle{amsalpha}
\bibliography{references}

\end{document}